\newtheorem{theorem}{Theorem}[section]
\newtheorem{prop}[theorem]{Proposition}
\newtheorem{lemma}[theorem]{Lemma}
\theoremstyle{remark}
\newtheorem{remark}[theorem]{Remark}
\numberwithin{counter}{section}
\theoremstyle{definition}
\newtheorem{example}[theorem]{Example}
\def\N{\mathbb{N}}
\def\Q{\mathbb{Q}}
\def\one{\mathbf{1}}
\def\zero{\mathbf{0}}
\newcommand{\T}{\mathsf{T}}
\newcommand{\e}{\varepsilon}
\newcommand{\st}{:\,}
\begin{document}

\title{Positional Voting and Doubly Stochastic Matrices}

\author{Jacqueline Anderson}
\address{Jacqueline Anderson, Department of Mathematics, Bridgewater State University, Bridgewater, MA 02325.}
\email{\texttt{Jacqueline.Anderson@bridgew.edu}}
\urladdr{\url{http://webhost.bridgew.edu/j21anderson/}}

\author{Brian Camara}
\address{Brian Camara, Department of Mathematics, University of Rhode Island, Kingston, RI 02881.}
\email{\texttt{briancamara@my.uri.edu }}

\author{John Pike}
\address{John Pike, Department of Mathematics,  Bridgewater State University, Bridgewater, MA 02325.}
\email{\texttt{john.pike@bridgew.edu}}
\urladdr{\url{http://webhost.bridgew.edu/j1pike/}}

\date{\today}

\keywords{positional voting, social choice, doubly stochastic matrices, braid arrangement}
\subjclass[2010]{91B12; 15B51; 52C35}

\begin{abstract}
We provide elementary proofs of several results concerning the possible outcomes arising from a fixed profile 
within the class of positional voting systems. Our arguments enable a simple and explicit construction of paradoxical profiles, 
and we also demonstrate how to choose weights that realize desirable results from a given profile. The analysis ultimately boils 
down to thinking about positional voting systems in terms of doubly stochastic matrices.
\end{abstract}

\maketitle

\section{Introduction}
\label{Introduction}

Suppose that $n$ candidates are running for a single office. There are many different social choice procedures one 
can use to select a winner. In this article, we study a particular class called \emph{positional voting systems}. A positional 
voting system is an electoral method in which each voter submits a ranked list of the candidates. Points are then assigned 
according to a fixed \emph{weighting vector} $\mathbf{w}$ that gives $w_i$ points to a candidate every time they appear in 
position $i$ on a ballot, and candidates are ranked according to the total number of points received. For example, plurality is a 
positional voting system with weighting vector 
$\mathbf{w} = [\,\setlength\arraycolsep{3pt}\begin{matrix} 1 & 0 & 0 & \cdots & 0\end{matrix}\,]^\T$. One point is 
assigned to each voter's top choice, and the candidate with the most points wins. The Borda count is another common 
positional voting system in which the weighting vector is given by 
$\mathbf{w} =[\,\setlength\arraycolsep{3pt}\begin{matrix} n-1 & n-2 & \cdots & 1 & 0\end{matrix}\,]^\T$. 
Other examples include the systems used in the Eurovision Song Contest, parliamentary elections in Nauru, and the 
AP College Football Poll \cite{BesRob,FraGro,HodKlim}.

By tallying points in this manner, a positional voting system outputs not just a winner, but a complete (though not necessarily strict) 
ranking of all candidates, called the \emph{societal ranking}. The societal ranking produced by a positional voting system depends 
not only on the set of ballots (called the \emph{profile}) but also on the choice of weighting vector. With the freedom to choose different 
weighting vectors, one can achieve many different outcomes from the same profile. An immediate question is 
``Given a profile, how many different societal rankings are possible?''

In \cite{Saari}, Donald Saari famously showed that for any profile on $n$ alternatives, there are at most $n!-(n-1)!$ 
possible strict societal rankings depending on the choice of weighting vector. Moreover, this bound is sharp in that there 
exist profiles for which exactly this many different strict rankings are possible. In \cite{DEMO}, Daugherty, Eustis, Minton, and 
Orrison provided a new approach to analyzing positional voting systems and extended some of Saari's results to 
cardinal (as opposed to ordinal) rankings, as well as to partial rankings; see also \cite{CrisOrr}. Our article serves to 
complement these works by providing alternative derivations that afford more explicit constructions, new perspectives, 
and arguments which some may find more accessible. 
 
After establishing some conceptual foundations, we proceed by proving the main result in \cite{DEMO} using facts about 
doubly stochastic matrices (Theorem~\ref{thm:main}). With a little more linear algebra, we are then able to recover 
Saari's findings (Theorems \ref{number rankings lower bound} and \ref{number rankings upper bound}). An advantage of 
our methodology is that it gives a concrete means of constructing ``paradoxical profiles.'' It also enables us to provide a 
simple geometric characterization of the possible outcomes resulting from a given profile (Theorem~\ref{convex hull}).

In addition, our work illustrates the utility of thinking about doubly stochastic matrices and the braid arrangement in 
problems related to social choice procedures. Doubly stochastic matrices arise very naturally in our analysis, and the 
braid arrangement provides a nice geometric realization of rankings. While our arguments do not depend on any deep facts about 
hyperplane arrangements, many of the objects we work with have a natural interpretation within this framework, and it provides  
a useful vernacular for thinking about such matters. As with the algebraic voting theory from \cite{DEMO}, the hope is that by 
formulating problems in terms of different mathematical constructs, new tools and perspectives become available. The 
connection with hyperplane arrangements has received some attention in previous works---for instance, Terao's proof of Arrow's 
impossibility theorem \cite{Terao}---but doubly stochastic matrices seem to have been given much less consideration in the context 
of voting.

Finally, our approach serves to partially bridge the perspectives from \cite{Saari} and \cite{DEMO}. Saari 
provides a geometric realization of positional voting procedures in terms of certain high-dimensional simplices; 
see also \cite{Saari95}. We use some similar ideas, but cast them in the language of the braid arrangement. 
On the other hand, Daugherty et al. describe positional voting using Young tableaux, which enables them to harness 
results from the representation theory of the symmetric group. Our analysis employs ideas from linear algebra to 
capture the essence of these arguments.


\section{Notation and Terminology}
\label{Notation}

Throughout this article, $n \geq 3$ is a fixed integer representing the number of candidates. The number of voters is $N$, 
which we only assume to be rational (though Proposition~\ref{convenient p} shows that we may take $N\in\N$ if we are 
just interested in ordinal rankings). While $n$ is arbitrary and given in advance, $N$ may have some implicit constraints 
depending on the context. We work exclusively over the field $\Q$ of rational numbers, and we write $\N_{0}$ for the 
set of nonnegative integers. Vectors are always written in boldface with the components of a generic $n$-dimensional 
vector $\mathbf{v}$ denoted by $v_1, v_2, \dots, v_n$. We write $\one$ for the vector of all ones and $J$ for the matrix 
of all ones, where the dimensions are clear from context. Finally, we use the notation $1\{\cdot\}$ to represent the indicator 
function, so that, for example, $1\{i<j\}$ equals $1$ if $i<j$ and $0$ otherwise.

Let $\sigma$ be a permutation in $S_{n}$ and define the following subsets of $\Q^n$:
\begin{gather*}
V_{0}=\big\{\mathbf{x} \in\Q^{n}\st x_{1}+\cdots+x_{n}=0\big\},\\
C_{\sigma}=\big\{\mathbf{x} \in\Q^{n}\st x_{\sigma(1)}>x_{\sigma(2)}>\cdots>x_{\sigma(n)}\big\},\\
W=C_{id}\cap V_{0}=\big\{\mathbf{x} \in\Q^{n}\st x_{1}>x_{2}>\cdots>x_{n},\;x_{1}+\cdots+x_{n}=0\big\}.
\end{gather*}
The set $W$ will be of particular importance for us and can be thought of as the set of strict weighting vectors. Clearly such vectors 
should have decreasing entries, and the sum-zero normalization is essentially because adding a multiple of $\one$ to a weighting vector 
does not affect the ranking of candidates. (Further elaboration is given at the end of this section.) If we wish to allow the same point value 
to be assigned to multiple candidates, we can consider the closure $\overline{W}$. 

Now label the permutations in $S_{n}$ lexicographically according to one-line notation, and let $R_{\ell}=R_{\sigma_{\ell}}$ be the 
$n\times n$ permutation matrix corresponding to $\sigma_{\ell}$, defined by $R_{\ell}(i,j)=1\{\sigma_{\ell}(j)=i\}$. 
Given a weighting vector $\mathbf{w}\in W$, define the $n\times n!$ matrix 
$T_{\mathbf{w}} =
\big[\,\setlength\arraycolsep{3pt}\begin{matrix}
\sigma_{1}\mathbf{w} & \sigma_{2}\mathbf{w} & \cdots & \sigma_{n!}\mathbf{w}\end{matrix}\,\big]$ having 
$\ell^{\text{th}}$ column 
\[
\sigma_{\ell}\mathbf{w}:=R_{\ell}\mathbf{w}
=\big[\,\setlength\arraycolsep{3pt}\begin{matrix}
w_{\sigma_{\ell}^{-1}(1)} & w_{\sigma_{\ell}^{-1}(2)} & \cdots & w_{\sigma_{\ell}^{-1}(n)}
\end{matrix}\,\big]^{\T}.
\]
For a given profile $\mathbf{p} \in\Q^{n!}$, the \emph{results vector} for the positional voting procedure associated with 
$\mathbf{w}$ is given by 
\begin{equation}
\label{resultsvector}
\mathbf{r} =T_{\mathbf{w}} \mathbf{p} =p_{1}R_{1}\mathbf{w} +p_{2}R_{2}\mathbf{w} +\cdots+p_{n!}R_{n!}\mathbf{w} 
=Q_{\mathbf{p}} \mathbf{w}
\end{equation}
where $Q_{\mathbf{p}}$ is a convenient shorthand for $\sum_{\ell=1}^{n!}p_{\ell}R_{\ell}$. 

Each $\sigma\in S_{n}$ corresponds to the ranking of the candidates (labeled $1$ through $n$) in which candidate 
$\sigma(k)$ is the $k^{\text{th}}$ favorite. The profile $\mathbf{p}$ encodes preferences of the electorate so that 
$p_{\ell}$ is the number of voters with preference $\sigma_{\ell}$.  The $(i,j)$-entry of $Q_{\mathbf{p}}$ is thus the number 
of voters ranking candidate $i$ in $j^{\text{th}}$ place. If each voter assigns $w_{k}$ points to their 
$k^{\text{th}}$ favorite candidate, then $r_{j}$ is the total number of points given to candidate $j$. The societal ranking 
for this election procedure is $\pi\in S_{n}$ with $\mathbf{r}\in C_{\pi}$. (We are assuming for the moment that a strict 
ranking is achieved. The possibility of ties will be addressed after Example~\ref{election}.) 

Note that we have given two different ways of computing the results vector $\mathbf{r}$ in equation~\eqref{resultsvector}. 
On one hand, we have $T_\mathbf{w}$, an $n \times n!$ matrix that encodes all the possible permutations of the weighting 
vector, which can be combined with the $n!$-dimensional profile vector $\mathbf{p}$ to yield the result. On the other hand, 
we have $Q_\mathbf{p}$, an $n \times n$ matrix that encodes the number of votes each candidate receives in each place, 
which can be combined with the $n$-dimensional weighting vector $\mathbf{w}$ to yield the result. 

\begin{example}
\label{election}
Consider an election with $4$ candidates and voter preferences described by the following table.

\begin{center}
\begin{tabular}{ |c|c| } 
 \hline
 \textnormal{Voting Preference} & \textnormal{Number of Votes} \\
 \hline
$(2, 3, 4, 1)$ & $8$  \\ 
 \hline
$(1, 3, 2, 4)$ & $5$  \\
 \hline
$(4, 3, 2, 1)$ & $10$  \\
 \hline
$(2, 3, 1, 4)$ & $8$   \\
 \hline
$(4, 1, 3, 2)$ & $7$  \\
 \hline
\end{tabular}
\end{center}

\smallskip

\noindent Then 
\begin{align*}
Q_{\mathbf{p}} & = 8\begin{bmatrix}0 & 0 & 0 & 1 \\1 & 0 & 0 & 0 \\0 & 1 & 0 & 0 \\0 & 0 & 1 & 0 \end{bmatrix} + 
			    5\begin{bmatrix}1 & 0 & 0 & 0 \\0 & 0 & 1 & 0 \\0 & 1 & 0 & 0 \\0 & 0 & 0 & 1 \end{bmatrix} + 
			    10\begin{bmatrix}0 & 0 & 0 & 1 \\0 & 0 & 1 & 0 \\0 & 1 & 0 & 0 \\1 & 0 & 0 & 0 \end{bmatrix}\\ 
				& \qquad\qquad\; +
 			    8\begin{bmatrix}0 & 0 & 1 & 0 \\1 & 0 & 0 & 0 \\0 & 1 & 0 & 0 \\0 & 0 & 0 & 1 \end{bmatrix} + 
 			    7\begin{bmatrix}0 & 1 & 0 & 0 \\0 & 0 & 0 & 1 \\0 & 0 & 1 & 0 \\1 & 0 & 0 & 0 \end{bmatrix} 
			    = \begin{bmatrix}5 & 7 & 8 & 18 \\ 16 & 0 & 15 & 7 \\ 0 & 31 & 7 & 0 \\ 17 & 0 & 8 & 13
			        \end{bmatrix}.
\end{align*}

To implement the Borda count, we use 
$\mathbf{w} =[\,\setlength\arraycolsep{3pt}\begin{matrix} 1.5 & 0.5 & -0.5 & -1.5\end{matrix}\,]^\T$, 
which is obtained from $[\,\setlength\arraycolsep{3pt}\begin{matrix} 3 & 2 & 1 & 0\end{matrix}\,]^\T$ by subtracting 
$\frac{1}{4}(3+2+1+0)\one$. This yields
\[
Q_{\mathbf{p}} \mathbf{w}
 = \begin{bmatrix}5 & 7 & 8 & 18 \\ 16 & 0 & 15 & 7 \\ 0 & 31 & 7 & 0 \\ 17 & 0 & 8 & 13  \end{bmatrix}
 \cdot \begin{bmatrix}1.5 \\0.5 \\-0.5 \\-1.5 \end{bmatrix}
 = \begin{bmatrix} -20 \\6 \\12 \\2 \end{bmatrix},
\]
so that the societal ranking is $(3,2,4,1)$.

If instead we use the plurality method, $\mathbf{w}
 = [\,\setlength\arraycolsep{3pt}\begin{matrix} 0.75 & -0.25 & -0.25 & -0.25\end{matrix}\,]^\T$, we find that 
\[
Q_{\mathbf{p}} \mathbf{w}
 =\begin{bmatrix}5 & 7 & 8 & 18 \\ 16 & 0 & 15 & 7 \\ 0 & 31 & 7 & 0 \\ 17 & 0 & 8 & 13  \end{bmatrix}
 \cdot \begin{bmatrix}0.75 \\-0.25 \\-0.25 \\-0.25\end{bmatrix}
 = \begin{bmatrix}-4.5 \\6.5 \\-9.5 \\7.5 \end{bmatrix},
\]
resulting in $(4,2,1,3)$. By changing the weights, we moved the first place candidate to last place!
\end{example}

Of course, it is also possible that $\mathbf{r} \not\in C_{\pi}$ for any $\pi \in S_{n}$ because there are ties between candidates. 
In this case, $\mathbf{r}$ lies on one or more of the hyperplanes $H_{ij}=\big\{\mathbf{x}\in\Q^{n}\st x_{i}=x_{j}\big\}$ comprising the 
\emph{braid arrangement}. The sets $C_{\sigma}$ described above are known as \emph{chambers} of the braid arrangement. If we allow for 
nonstrict inequalities in their definition, the resulting objects are called \emph{faces}. The faces of the braid arrangement correspond to ordered 
set partitions of $[n]$ via $G\sim\big(B_{1},\ldots,B_{m}\big)$ when $G$ consists of all $\mathbf{x}\in\mathbb{Q}^{n}$ such that 
$x_{i}>x_{j}$ if and only if there exist $k<\ell$ with $i\in B_{k}$ and $j\in B_{\ell}$. If the results vector $\mathbf{r}$ belongs to $G$, then 
$B_{k}$ is the set of candidates tied for $k^{\text{th}}$ place. The chambers are $n$-dimensional faces representing strict rankings, and two results 
vectors in $\Q^{n}$ lie in the same face if they correspond to identical rankings of the candidates.

Finally, we observe that if $\mathbf{w}\in V_{0}$, then $\sigma\mathbf{w}\in V_{0}$ for all $\sigma\in S_{n}$, so $\mathbf{r}$ as 
defined in equation~\eqref{resultsvector} is a linear combination of sum-zero vectors and thus lies in $V_{0}$ as well. Also, the 
decomposition $Q_{\mathbf{p}}=\sum_{\ell=1}^{n!}p_{\ell}R_{\ell}$ shows that every row and column of $Q_{\mathbf{p}}$ sums 
to $N=\sum_{\ell=1}^{n!}p_{\ell}$, the total number of ballots cast. Thus the condition that $\mathbf{w}\in V_{0}$ is not much of a 
restriction since any $\mathbf{y}\in C_{id}$ can be decomposed as $\mathbf{y}=\overline{\mathbf{y}}+a_{\mathbf{y}}\one$ where 
$\overline{\mathbf{y}}\in W$ and $a_{\mathbf{y}} = \frac{1}{n} \sum_{i=1}^n y_i$. Moreover, $Q_{\mathbf{p}}\mathbf{y}$ lies 
in the same face as $Q_{\mathbf{p}}\overline{\mathbf{y}}$ because
\[ 
Q_{\mathbf{p}}\mathbf{y}
=Q_{\mathbf{p}}\overline{\mathbf{y}}+a_{\mathbf{y}}Q_{\mathbf{p}}\one
=Q_{\mathbf{p}}\overline{\mathbf{y}}+Na_{\mathbf{y}}\one.
\]
Indeed, $\bigcap_{i<j}H_{ij}=\{c\one\st c\in\Q\}$, so it is natural to project the braid arrangement onto the orthogonal complement, 
$V_{0}$. This is called its \emph{essentialization}.


\section{Main Results}
\label{Results}

One of the key insights of this article is that we can construct paradoxical profiles by considering matrices that have all row and column 
sums equal. This is because such a matrix can be shifted and scaled to obtain a \emph{doubly stochastic matrix} (all entries 
nonnegative and all row and column sums equal to $1$). This then enables us to appeal to the \emph{Birkhoff--von Neumann theorem} 
\cite{Birk,vonN}, which states that every doubly stochastic matrix $P$ is a convex combination of permutation matrices, 
$P=\sum_{\ell=1}^{n!}\lambda_{\ell}R_{\ell}$ with $\lambda_{1},\ldots,\lambda_{\ell}\geq 0$ and 
$\lambda _{1}+\cdots+\lambda_{\ell}=1$.

The following result, which is known but included for the sake of completeness, distills these observations into a convenient form 
which will be crucial for subsequent arguments.
\begin{prop}
\label{lin comb} 
Let $\mathcal{P}$ be any collection of $(n-1)^{2}+1$ linearly independent $n\times n$ permutation matrices, and let 
$\mathcal{M}_{n}$ be the vector space of $n\times n$ matrices over $\Q$ with all row and columns sums equal. Then every matrix 
in $\mathcal{M}_{n}$ can be written as a linear combination of matrices in $\mathcal{P}$.
\end{prop}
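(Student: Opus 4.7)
The plan is to prove the proposition via a straightforward dimension count. Two observations are key: every permutation matrix has all row and column sums equal to $1$ and thus lies in $\mathcal{M}_n$; and if $\dim \mathcal{M}_n = (n-1)^2 + 1$, then any linearly independent subset of $\mathcal{M}_n$ of that cardinality is automatically a basis. Granting the dimension formula, the conclusion of the proposition follows immediately from this basic fact of linear algebra.

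The heart of the argument is therefore the computation $\dim \mathcal{M}_n = (n-1)^2 + 1$. My approach would be to decompose $\mathcal{M}_n = \mathcal{M}_n^0 \oplus \Q J$, where $\mathcal{M}_n^0$ denotes the subspace of matrices whose rows and columns all sum to zero. Any $A \in \mathcal{M}_n$ with common sum $s$ admits the unique decomposition $A = \bigl(A - \tfrac{s}{n}J\bigr) + \tfrac{s}{n}J$, which exhibits the direct sum with the second summand one-dimensional; uniqueness is immediate since $aJ \in \mathcal{M}_n^0$ forces $a=0$.

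It then remains to show $\dim \mathcal{M}_n^0 = (n-1)^2$. I would prove this by giving an explicit parameterization: the entries of the top-left $(n-1)\times(n-1)$ block of a matrix $A \in \mathcal{M}_n^0$ can be chosen arbitrarily, and the remaining $2n-1$ entries are then uniquely determined. Specifically, for $i < n$ the zero row-sum condition gives $a_{in} = -\sum_{j<n} a_{ij}$, and for $j < n$ the zero column-sum condition gives $a_{nj} = -\sum_{i<n} a_{ij}$. The only point requiring a brief check is that the bottom-right entry is consistently determined: computing $a_{nn}$ from either the last row sum or the last column sum yields $\sum_{i<n}\sum_{j<n} a_{ij}$ in both cases. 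This produces a linear bijection between $\mathcal{M}_n^0$ and $\Q^{(n-1)^2}$, establishing the dimension and completing the argument. I do not anticipate any serious obstacle here, as the proposition essentially reduces to a standard dimension computation equivalent to the well-known fact that the Birkhoff polytope has dimension $(n-1)^2$.
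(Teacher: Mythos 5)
Your proof is correct, and it is genuinely leaner than the paper's. The paper first invokes the Birkhoff--von Neumann theorem to show that $\mathcal{M}_{n}$ equals the linear span of \emph{all} the permutation matrices (shift a given matrix by a multiple of $J$ and rescale to reach a doubly stochastic matrix, then decompose), and only afterwards computes $\dim\mathcal{M}_{n}=(n-1)^{2}+1$ using the explicit spanning set $\{B_{i,j}\}\cup\{I\}$. You observe that for the statement as literally given the first step is unnecessary: every permutation matrix lies in $\mathcal{M}_{n}$, so a linearly independent family of $(n-1)^{2}+1$ of them is automatically a basis once the dimension is known, and your dimension count --- via $\mathcal{M}_{n}=\mathcal{M}_{n}^{0}\oplus\Q J$ together with the free top-left $(n-1)\times(n-1)$ block of $\mathcal{M}_{n}^{0}$ and the consistency check on the $(n,n)$ entry --- is complete and correct. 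What the paper's extra work buys is non-vacuousness: the Birkhoff--von Neumann step shows the permutation matrices actually span $\mathcal{M}_{n}$, hence that a collection $\mathcal{P}$ satisfying the hypothesis exists at all. That existence is what gets used downstream (e.g.\ in the proof of Theorem~\ref{thm:main}, where an arbitrary $Q$ with constant row and column sums must be written as $\sum_{\ell}p_{\ell}R_{\ell}$); your argument alone would leave the proposition potentially vacuous and that application unsupported, though, as the paper's remark notes, existence can also be obtained elementarily by exhibiting an explicit basis of permutation matrices rather than by appealing to Birkhoff--von Neumann.
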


\begin{proof}
Suppose $S$ is an $n\times n$ matrix with all row and column sums equal to $t$. If $S=\frac{t}{n}J$, then $S$ is a linear combination 
of permutation matrices because $J$ is; see below. Otherwise, let $m=\min _{(i,j)\in[n]^{2}}S(i,j)$. 
Then $P=(t-mn)^{-1}(S-mJ)$ is doubly stochastic, so the Birkhoff--von Neumann theorem shows that $P$ can be 
written as a convex combination of permutation matrices, $P=\sum_{\ell=1}^{n!}\lambda_{\ell}R_{\ell}$. Similarly, 
$\frac{1}{n}J=\sum_{\ell=1}^{n!}\kappa_{\ell}R_{\ell}$ as it too is doubly stochastic. It follows that $S=(t-mn)P+mJ$ is a linear 
combination of permutation matrices. Since every linear combination of permutation matrices has all row and column sums equal, 
$\mathcal{M}$ is precisely the linear span of the permutation matrices. 

To see that the dimension of $\mathcal{M}_{n}$ is $(n-1)^{2}+1$, define $B_{i,j}$ to be the $n\times n$ matrix with $1$'s 
in positions $(i,j)$ and $(n,n)$, $-1$'s in positions $(i,n)$ and $(n,j)$, and $0$'s elsewhere for each $(i,j)\in[n-1]^{2}$. 
If $Z=[z_{i,j}]_{i,j=1}^{n}$ is any matrix with all row and column sums equal to $0$, then it is easy to see that 
$Z=\sum_{i=1}^{n-1}\sum_{j=1}^{n-1}z_{i,j}B_{i,j}$. Now let $S$ be any matrix with all row and column sums equal to $t$. 
Then $S-tI$ has all row and column sums zero; hence $S$ can be expressed as a linear combination of the $B_{i,j}$'s and $I$. 
As these $(n-1)^{2}+1$ matrices are clearly linearly independent, the assertion follows.
\end{proof}

\begin{remark}
Proposition~\ref{lin comb} can also be proved without invoking the Birkhoff--von Neumann theorem by showing that the collection of 
permutation matrices
\[
\mathcal{B}=\big\{R_{(i,j,n)}\st i,j\in[n-1]\text{ are distinct}\big\}\cup\big\{R_{(i,n)}\st i\in[n-1]\big\}\cup\{I\}
\]
is a basis for $\mathcal{M}_{n}$. (The subscripts represent permutations in cycle notation and $R_{\sigma}$ is as previously 
defined.) Linear independence follows by looking at the final rows and columns of the matrices, and 
$\mathcal{M}_{n}=\text{span}(\mathcal{B})$ follows from the dimension argument in the proof of Proposition~\ref{lin comb} upon 
observing that $B_{i,j}=I-R_{(i,n)}-R_{(j,n)}+R_{(j,i,n)}$ for distinct $i,j<n$ and $B_{k,k}=I-R_{(k,n)}$ for $k<n$.
\end{remark}

The following theorem was proved in \cite{DEMO} using facts about the representation theory of $S_{n}$. Our proof is based on 
the same general reasoning---essentially, that one can write $T_{\mathbf{w}}\mathbf{p}=Q_{\mathbf{p}}\mathbf{w}$---but 
uses only linear algebra. In words, one can fix in advance a number of different positional voting procedures, along with desired election 
outcomes for each procedure, and then find (infinitely many) profiles such that each procedure yields the corresponding outcome!
\begin{theorem}
\label{thm:main} \textnormal{(\cite[Theorem 1]{DEMO})} 
Given any linearly independent weighting vectors $\mathbf{w}_{1},\ldots,\mathbf{w}_{n-1}\in W$ and any 
results vectors $\mathbf{r}_{1},\ldots,\mathbf{r}_{n-1}\in V_{0}$, there are infinitely many profiles $\mathbf{p}\in\Q^{n!}$ with 
$T_{\mathbf{w}_{k}}\mathbf{p}=\mathbf{r}_{k}$ for $k=1,\ldots,n-1$. 
\end{theorem}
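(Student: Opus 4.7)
The plan is to exploit the identity $T_{\mathbf{w}}\mathbf{p}=Q_{\mathbf{p}}\mathbf{w}$ from equation~\eqref{resultsvector}, which lets me recast the problem: prescribing outcomes for $n-1$ positional procedures with weighting vectors $\mathbf{w}_k$ is the same as prescribing the action of the single matrix $Q_{\mathbf{p}}$ on those vectors. Since the $\mathbf{w}_k$ are linearly independent in the $(n-1)$-dimensional space $V_0$, they form a basis of $V_0$, so what I really need is a matrix $Q\in\mathcal{M}_n$ with $Q\mathbf{w}_k=\mathbf{r}_k$ for each $k$; Proposition~\ref{lin comb} will then produce a profile $\mathbf{p}\in\Q^{n!}$ with $Q_{\mathbf{p}}=Q$.

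To produce such a $Q$, I would introduce the linear evaluation map $\phi:\mathcal{M}_n\to V_0^{n-1}$ given by $\phi(Q)=(Q\mathbf{w}_1,\ldots,Q\mathbf{w}_{n-1})$. This is well-defined because $Q\in\mathcal{M}_n$ has $\one^{\T}Q=c\one^{\T}$ for some $c$, and hence $\one^{\T}(Q\mathbf{w}_k)=c\one^{\T}\mathbf{w}_k=0$ for each $k$. Existence of the required $Q$ is equivalent to surjectivity of $\phi$, and a dimension count reduces this to showing $\dim\ker\phi=1$, since Proposition~\ref{lin comb} gives $\dim\mathcal{M}_n=(n-1)^2+1$ while $\dim V_0^{n-1}=(n-1)^2$. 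For the kernel, any $Q\in\ker\phi$ annihilates $\mathrm{span}\{\mathbf{w}_k\}=V_0$ and sends $\one$ to some $c\one$; decomposing an arbitrary $\mathbf{x}\in\Q^n$ as $\mathbf{x}_0+\tfrac{1}{n}(\one^{\T}\mathbf{x})\one$ with $\mathbf{x}_0\in V_0$ then forces $Q=\tfrac{c}{n}J$, so $\ker\phi=\Q\cdot J$ is one-dimensional, as needed.

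For the ``infinitely many'' clause I would observe that the composite $\Psi:\Q^{n!}\to V_0^{n-1}$ sending $\mathbf{p}$ to $(T_{\mathbf{w}_k}\mathbf{p})_{k=1}^{n-1}$ factors as $\phi$ composed with the map $\mathbf{p}\mapsto Q_{\mathbf{p}}$, which is surjective onto $\mathcal{M}_n$ by Proposition~\ref{lin comb}; hence $\Psi$ is surjective with kernel of dimension $n!-(n-1)^2\geq 2$ for $n\geq 3$, so the valid profiles form a positive-dimensional affine subspace of $\Q^{n!}$, and in particular an infinite set. I expect the main technical step to be the one-line kernel computation for $\phi$; once that is in place, everything else is a mechanical assembly of the duality $T_{\mathbf{w}}\mathbf{p}=Q_{\mathbf{p}}\mathbf{w}$ with Proposition~\ref{lin comb}.
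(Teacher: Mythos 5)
Your argument is correct, and while it shares the paper's overall skeleton---recast the problem via $T_{\mathbf{w}}\mathbf{p}=Q_{\mathbf{p}}\mathbf{w}$ as finding $Q\in\mathcal{M}_n$ with $Q\mathbf{w}_k=\mathbf{r}_k$ and constant row and column sums, then invoke Proposition~\ref{lin comb}---it handles the central existence step by a genuinely different route. The paper constructs $Q$ explicitly: it augments the data with $\mathbf{w}_0=\mathbf{r}_0=\one$, sets $Q=RF^{-1}$ where $F$ and $R$ have the $\mathbf{w}_k$ and $\mathbf{r}_k$ as columns, and then verifies by direct computation that $\one$ and $\one^{\T}$ are right and left eigenvectors of $Q$. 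You instead prove existence abstractly, showing the evaluation map $\phi:\mathcal{M}_n\to V_0^{n-1}$ is surjective via rank--nullity: $\dim\mathcal{M}_n=(n-1)^2+1$, $\dim V_0^{n-1}=(n-1)^2$, and $\ker\phi=\Q\cdot J$. Your kernel computation is sound, since any $Q\in\ker\phi$ annihilates $V_0=\mathrm{span}\{\mathbf{w}_1,\ldots,\mathbf{w}_{n-1}\}$ and sends $\one$ to a multiple of itself, forcing $Q$ to be a multiple of $J$. Your route is arguably cleaner, isolating the one nontrivial fact (the kernel is one-dimensional), but it is non-constructive as stated; the paper's explicit $Q=RF^{-1}$ is what powers the worked example that follows and the observation that the method actually \emph{produces} paradoxical profiles via a Birkhoff--von Neumann decomposition. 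Your treatment of the ``infinitely many'' clause---surjectivity of the composite $\Psi$ together with $\dim\ker\Psi=n!-(n-1)^2>0$ for $n\geq 3$---is also correct and in the same spirit as the paper's dimension count on the fiber of $\mathbf{p}\mapsto Q_{\mathbf{p}}$.
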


\begin{proof}
The general strategy will be to construct a matrix $Q$ such that $Q\mathbf{w}_{k}=\mathbf{r}_{k}$ for $k=1,\ldots,n-1$ and show that this 
matrix has right and left eigenvectors $\one$ and $\one^{\T}$; hence its row and column sums are constant. Proposition~\ref{lin comb} 
then gives $Q=\sum_{\ell}p_{\ell}R_{\ell}$, and thus $T_{\mathbf{w}_{k}}\mathbf{p}=Q\mathbf{w}_{k}=\mathbf{r}_{k}$.

To begin, define $\mathbf{r}_{0}=\mathbf{w}_{0}=\one$ and set 
\[
F=\big[\,\setlength\arraycolsep{3pt}\begin{matrix} \mathbf{w}_{0} & \mathbf{w}_{1}
 & \cdots & \mathbf{w}_{n-1}\end{matrix}\,\big],\:\, 
R=\big[\,\setlength\arraycolsep{3pt}\begin{matrix} \mathbf{r}_{0}
 & \mathbf{r}_{1} & \cdots & \mathbf{r}_{n-1}\end{matrix}\,\big],\;\text{ and }\; 
Q=RF^{-1}.
\]
($F$ is invertible because $\mathbf{w}_{1},\ldots,\mathbf{w}_{n-1}$ are linearly independent and all orthogonal 
to $\mathbf{w}_{0}$.) Then $Q\mathbf{w}_{k}=\mathbf{r}_{k}$ for $k=0,\ldots,n-1$ since
\[
\setlength\arraycolsep{3pt}\big[\,\begin{matrix}
Q\mathbf{w}_{0} & Q\mathbf{w}_{1} & \cdots & Q\mathbf{w}_{n-1}\end{matrix}\,\big]=QF=R
=\setlength\arraycolsep{3pt}\big[\,\begin{matrix}
\mathbf{r}_{0} & \mathbf{r}_{1} & \cdots & \mathbf{r}_{n-1}\end{matrix}\,\big].
\]

The condition $Q\mathbf{w}_{0}=\mathbf{r}_{0}$ implies that the rows of $Q$ sum to $1$. To see that the columns sum to $1$, 
we first observe that 
\[
\mathbf{w}_{0}^{\T}R=\setlength\arraycolsep{3pt}\big[\,\begin{matrix}
\left\langle \mathbf{w}_{0},\mathbf{r}_{0}\right\rangle  & \left\langle \mathbf{w}_{0},\mathbf{r}_{1}\right\rangle 
 & \cdots & \left\langle \mathbf{w}_{0},\mathbf{r}_{n-1}\right\rangle \end{matrix}\,\big]
=\setlength\arraycolsep{3pt}[\,\begin{matrix} n & 0 & \cdots & 0\end{matrix}\,]
\]
since $\mathbf{w}_{0}=\mathbf{r}_{0}=\one$ is orthogonal to each of $\mathbf{r}_{1},\ldots,\mathbf{r}_{n-1}$ by assumption. 
As such, 
\[
\mathbf{w}_{0}^{\T}Q=\mathbf{w}_{0}^{\T}RF^{-1}
=\setlength\arraycolsep{3pt}[\,\begin{matrix} n & 0 & \cdots & 0\end{matrix}\,]F^{-1}
=n\bm{f}
\]
where $\bm{f}$ is the first row of $F^{-1}$. Since $F^{-1}F=I$, we must have  
$\left\langle \bm{f}^{\T},\mathbf{w}_{0}\right\rangle =1$ and 
$\left\langle \bm{f}^{\T},\mathbf{w}_{k}\right\rangle =0$ for $k=1,\ldots,n-1$. 
The latter condition implies that $\bm{f}=C\one^{\T}$ for some $C$, so the former implies 
$1=\left\langle \bm{f}^{\T},\mathbf{w}_{0}\right\rangle =C\left\langle \one,\one\right\rangle =nC$. 
Therefore, 
\[
\mathbf{w}_{0}^{\T}Q=n\bm{f}=\mathbf{w}_{0}^{\T}, 
\]
so the columns of $Q$ sum to $1$ as well. 

Since $Q$ has all rows and columns summing to $1$, it follows from Proposition~\ref{lin comb} that it is a linear combination of 
permutation matrices. In other words, there exists $\mathbf{p}\in\Q^{n!}$ such that $Q=\sum_{\ell=1}^{n!}p_{\ell}R_{\ell}$. 
Accordingly, $T_{\mathbf{w}_{k}}\mathbf{p}=Q\mathbf{w}_{k}=\mathbf{r}_{k}$ for $k=1,\ldots,n-1$. In fact there are infinitely 
many such $\mathbf{p}$ since there are $n!$ permutation matrices and the space of doubly stochastic matrices is 
$(n^{2}-2n+2)$-dimensional.
\end{proof}

The preceding proof works just as well if one takes the weighting vectors to lie in $\overline{W}$, the closure of $W$. 
This allows for voting schemes in which the same point value can be assigned to multiple candidates, as in the ``vote for your favorite $k$'' 
system given by $\mathbf{w}
=[\,\setlength\arraycolsep{3pt}\begin{matrix}n-k & \cdots & n-k & -k & \cdots & -k\end{matrix}\,]^{\T}$.
One may impose additional constraints such as all weighting vectors having the same positions tied by restricting to some lower-dimensional 
face $G\subset\overline{W}$, but then the linear independence condition dictates that there are only $d=\dim(G)$ weighting/results 
vectors. To treat this case, take $\mathbf{w}_{1},\ldots,\mathbf{w}_{d}$ to be linearly independent vectors in $G$ and 
$\mathbf{r}_{1},\ldots,\mathbf{r}_{d}$ to be the desired results vectors in $V_{0}$. Then choose 
$\mathbf{w}_{d+1},\ldots,\mathbf{w}_{n-1}$ to be any vectors in $\overline{W}$ for which 
$\mathbf{w}_{1},\ldots,\mathbf{w}_{n-1}$ are linearly independent and let $\mathbf{r}_{d+1},\ldots,\mathbf{r}_{n-1}$ 
be any results vectors in $V_{0}$.

Also, observe that $Q=RF^{-1}$ is explicit and may be easily realized as a linear combination of doubly stochastic matrices. (The 
entries of $Q$ may be negative, so one must add an appropriate multiple of the all ones matrix and rescale to obtain a doubly 
stochastic matrix $P$ as in the proof of Proposition~\ref{lin comb}.) As there are algorithms for finding a Birkhoff--von Neumann 
decomposition of any doubly stochastic matrix \cite{DufUcar}, our method actually provides a construction of the paradoxical profile.

\begin{example}
Suppose that 
\begin{equation*}
\mathbf{w}_{1}=\begin{bmatrix} 3 \\ 1 \\ -1 \\ -3 \end{bmatrix}, \mathbf{w}_{2}=\begin{bmatrix} 1 \\ 1 \\ 1 \\ -3 \end{bmatrix}, 
\mathbf{w}_{3}=\begin{bmatrix} 17 \\ 1 \\ -7 \\ -11 \end{bmatrix},
\mathbf{r}_{1}=\begin{bmatrix} -2 \\ -11 \\ 4 \\ 9 \end{bmatrix}, \mathbf{r}_{2}=\begin{bmatrix} 4 \\ 5 \\ 3 \\ -12 \end{bmatrix}, 
\mathbf{r}_{3}=\begin{bmatrix} 13 \\ -2 \\ -6 \\ -5 \end{bmatrix}.
\end{equation*}
Then 
\begin{equation*}
Q=\begin{bmatrix} 1 & -2 & 4 & 13\\ 1 & -11 & 5 & -2\\ 1 & 4 & 3 & -6\\ 1 & 9 & -12 & -5\end{bmatrix} 
    \begin{bmatrix} 1 & 3 & 1 & 17\\ 1 & 1 & 1 & 1\\ 1 & -1 & 1 & -7 \\ 1 & -3 & -3 & -11\end{bmatrix}^{\displaystyle{-1}}
=\dfrac{1}{8} \begin{bmatrix} 27 & -64 & 51 & -6\\ 49 & -146 & 113 & -8\\
 -17 & 50 & -21 & -4 \\ -51 & 168 & -135 & 26\end{bmatrix}
\end{equation*}
and 
\[
P=\big(1+4\cdot\tfrac{146}{8}\big)^{-1}\big(Q+\tfrac{146}{8} J\big)
=\frac{1}{592} \begin{bmatrix} 173 & 82 & 197 & 140 \\ 195 & 0 & 259 & 138 \\ 129 & 196 & 125 & 142 
 \\ 95 & 314 & 11 & 172\end{bmatrix}
\]
is doubly stochastic. 

Using built-in functionality in the computer algebra system \emph{SageMath} \cite{sage}, a Birkhoff--von Neumann decomposition of $P$ is given by 
\begin{align*}
P & =\tfrac{71}{296}R_{(1,4,2,3)}+\tfrac{31}{592}R_{(1,4,3,2)}+\tfrac{43}{148}R_{(2,3,1,4)}+\tfrac{11}{592}R_{(2,3,4,1)}
+\tfrac{3}{148}R_{(2,4,3,1)}\\
 & \qquad +\tfrac{3}{148}R_{(3,4,1,2)}+\tfrac{117}{592}R_{(3,4,2,1)}+\tfrac{41}{296}R_{(4,1,3,2)}+\tfrac{13}{592}R_{(4,3,1,2)}.
\end{align*}
Here the subscripts represent permutations in one-line notation.

Since $Q=74P-\tfrac{73}{4}J$ and $J=R_{(1,2,3,4)}+R_{(2,1,4,3)}+R_{(3,4,1,2)}+R_{(4,3,2,1)}$, we see that 
\begin{align*}
Q & = -\tfrac{73}{4}R_{(1,2,3,4)}+\tfrac{71}{4}R_{(1,4,2,3)}+\tfrac{31}{8}R_{(1,4,3,2)}-\tfrac{73}{4}R_{(2,1,4,3)}
+\tfrac{43}{2}R_{(2,3,1,4)}\\
 & \qquad +\tfrac{11}{8}R_{(2,3,4,1)}+\tfrac{3}{2}R_{(2,4,3,1)}-\tfrac{67}{4}R_{(3,4,1,2)}+\tfrac{117}{8}R_{(3,4,2,1)}
+\tfrac{41}{4}R_{(4,1,3,2)}\\
 & \qquad\qquad+\tfrac{13}{8}R_{(4,3,1,2)}-\tfrac{73}{4}R_{(4,3,2,1)}.
\end{align*}
Thus one profile for which the weight $\mathbf{w}_{k}$ produces the result $\mathbf{r}_{k}$ consists of $-\tfrac{73}{4}$ votes for 
$1$ above $2$ above $3$ above $4$, $\frac{71}{4}$ votes for $1$ above $4$ above $2$ above $3$, and so forth. 
\end{example}

In typical settings, we are concerned with ordinal rather than cardinal rankings, and Theorem~\ref{thm:main} can then be used to 
generate significantly many more outcomes. To facilitate the ensuing argument, we record the following simple lemma.
\begin{lemma}
\label{scaling lemma} 
For any $\mathbf{w}\in W$, $\mathbf{x}\in V_{0}$, there is some rational number $\eta_{0}>0$ such that 
$\eta\mathbf{w}+\mathbf{x}\in W$ for all $\eta\geq\eta_{0}$.
\end{lemma}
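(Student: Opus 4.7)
The plan is to reduce the statement to checking the two defining conditions of $W$: that $\eta\mathbf{w}+\mathbf{x}\in V_{0}$ and that its entries are strictly decreasing. The first condition is automatic since both $\mathbf{w}\in W\subset V_{0}$ and $\mathbf{x}\in V_{0}$, so any linear combination lies in $V_{0}$. Everything interesting therefore happens in the second condition, which is a finite system of strict linear inequalities in the single unknown $\eta$.

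Concretely, writing out the entrywise condition $(\eta\mathbf{w}+\mathbf{x})_{i}>(\eta\mathbf{w}+\mathbf{x})_{i+1}$ for $i=1,\dots,n-1$, I would rearrange to
\[
\eta(w_{i}-w_{i+1})>x_{i+1}-x_{i}.
\]
Because $\mathbf{w}\in W$, each denominator $w_{i}-w_{i+1}$ is strictly positive, so this is equivalent to $\eta>\frac{x_{i+1}-x_{i}}{w_{i}-w_{i+1}}$ for every $i$. I would then define $\eta_{0}$ to be any rational number strictly larger than $\max\left(0,\max_{1\leq i\leq n-1}\frac{x_{i+1}-x_{i}}{w_{i}-w_{i+1}}\right)$, for instance the maximum plus $1$. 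Since the $w_{i}$ and $x_{i}$ are rational, each ratio is rational, and the resulting $\eta_{0}$ is a positive rational satisfying all $n-1$ inequalities simultaneously; the same then holds for every $\eta\geq\eta_{0}$. Combined with the sum-zero observation, this places $\eta\mathbf{w}+\mathbf{x}\in W$ as required.

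There is no substantive obstacle here; the lemma is essentially a bookkeeping statement that the open chamber $W$ is a cone asymptotically dominating any bounded perturbation along directions in $V_{0}$. The only small point to be careful about is insisting $\eta_{0}>0$ (needed because $\max_{i}$ of the ratios could be negative) and that $\eta_{0}$ be rational, both of which are handled by the explicit formula above.
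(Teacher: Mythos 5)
Your proof is correct and follows essentially the same approach as the paper: both arguments exploit the strict positivity of the successive differences $w_{i}-w_{i+1}$ to show that for $\eta$ large enough the perturbation by $\mathbf{x}$ cannot reverse any inequality, with the paper choosing the uniform threshold $\eta_{0}=3M/m$ where $m=\min_{k}(w_{k}-w_{k+1})$ and $M=\max_{k}|x_{k}|$ rather than your coordinatewise maximum of ratios. Your version is slightly sharper and equally valid; no issues.
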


\begin{proof}
Let $m=\min_{1\leq k\leq n-1}(w_{k}-w_{k+1})$ and $M=\max_{1\leq k\leq n}\left|x_{k}\right|$, and set $\eta=3M/m$. 
Then the successive entries of $\eta\mathbf{w}$ differ by at least $3M$, so adding $\mathbf{x}$ does not change their relative 
order.
\end{proof}

Also, recall that a conical combination of vectors is a linear combination in which all coefficients are nonnegative, and observe 
that $W$ is closed under nontrivial conical combinations: if $\mathbf{w}_{1}, \mathbf{w}_{2}, \ldots, \mathbf{w}_{k} \in W$, then 
$ c_{1}\mathbf{w}_{1}+c_{2} \mathbf{w}_{2} + \cdots + c_{k} \mathbf{w}_{k} \in W$ whenever $c_{1}, \ldots, c_{k} \geq 0$ with 
$c_{i} \neq 0$ for at least one $i$. A set with this property is called a \emph{convex cone}.

Our next theorem implies the result from \cite{Saari} that there exist profiles from which $\big(\frac{n-1}{n}\big)n!$ different 
ordinal rankings can be obtained by judicious choices of weighting vectors in $W$.

\begin{theorem}
\label{number rankings lower bound} 
There exist infinitely many profiles $\mathbf{p}\in\Q^{n!}$ such that for every $\pi\in S_{n}$ satisfying 
$\pi(n)\neq1$, there is some $\mathbf{w}(\pi)\in W$ with $T_{\mathbf{w}(\pi)}\mathbf{p}\in C_{\pi}$.
\end{theorem}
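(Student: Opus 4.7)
The strategy is to apply the closure-of-$W$ extension of Theorem~\ref{thm:main} with the extremal rays of $\overline{W}$ as weighting vectors, and then exploit the convex cone structure of $W$ to realize each target ranking as a positive conical combination. Specifically, I would take $\mathbf{w}_{k}=\mathbf{v}_{k}:=(n-k,\ldots,n-k,-k,\ldots,-k)^{\T}$ (with $k$ copies of $n-k$ followed by $n-k$ copies of $-k$) for $k=1,\ldots,n-1$; these are the $n-1$ extremal rays of $\overline{W}$, and every $\mathbf{w}\in W$ has a unique expansion $\mathbf{w}=\sum_{k}c_{k}\mathbf{v}_{k}$ with $c_{k}>0$, while any such strict positive combination lies in $W$ because consecutive entries of $\sum c_{k}\mathbf{v}_{k}$ differ by $nc_{j}>0$. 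Theorem~\ref{thm:main} then produces infinitely many profiles $\mathbf{p}$ with $T_{\mathbf{v}_{k}}\mathbf{p}=\mathbf{r}_{k}$ for any prescribed $\mathbf{r}_{1},\ldots,\mathbf{r}_{n-1}\in V_{0}$, and by linearity $T_{\sum c_{k}\mathbf{v}_{k}}\mathbf{p}=\sum c_{k}\mathbf{r}_{k}$. The theorem therefore reduces to choosing the $\mathbf{r}_{k}$'s so that for every $\pi$ with $\pi(n)\neq 1$, some strictly positive combination $\sum c_{k}\mathbf{r}_{k}$ lies in $C_{\pi}$.

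To arrange this, I would set $\mathbf{r}_{k}=\e\mathbf{d}+\mathbf{s}_{k}$, where $\mathbf{d}=(n-1,-1,\ldots,-1)^{\T}\in V_{0}$ is the ``candidate-$1$-dominates'' direction and $\mathbf{s}_{1},\ldots,\mathbf{s}_{n-1}\in V_{0}\cap\{x_{1}=0\}$ are chosen so that $\sum_{k}\mathbf{s}_{k}=\zero$ and their span is this $(n-2)$-dimensional hyperplane---for instance, $\mathbf{s}_{k}$ with first coordinate $0$ and remaining coordinates $\mathbf{e}_{k}-\tfrac{1}{n-1}\one$ in $\Q^{n-1}$. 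The sum-zero relation among the $\mathbf{s}_{k}$'s allows the reparametrization $c_{k}=\alpha+d_{k}$ with $\sum d_{k}=0$, so that $\mathbf{t}:=\sum c_{k}\mathbf{s}_{k}=\sum d_{k}\mathbf{s}_{k}$ ranges freely over the entire hyperplane $V_{0}\cap\{x_{1}=0\}$ as the $d_{k}$'s vary, while $S=\sum c_{k}=(n-1)\alpha$ can be made as large as needed (keeping all $c_{k}>0$) by taking $\alpha$ large. A direct computation then shows that $\sum c_{k}\mathbf{r}_{k}$ has first coordinate $\e(n-1)S$ and $j^{\text{th}}$ coordinate $-\e S+t^{(j)}$ for $j\geq 2$, so candidate~$1$ outranks candidate~$j$ exactly when $t^{(j)}<\e nS$, and the ordering among candidates $\{2,\ldots,n\}$ reflects the ordering of the $t^{(j)}$'s.

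Given any $\pi$ with $\pi(k_{0})=1$ for some $k_{0}\in\{1,\ldots,n-1\}$, I would select $\mathbf{t}$ so that
\[
t^{(\pi(1))}>\cdots>t^{(\pi(k_{0}-1))}>\e nS>t^{(\pi(k_{0}+1))}>\cdots>t^{(\pi(n))},
\]
which places the candidates in exactly the ranking $\pi$. Because $k_{0}\leq n-1$, at least one entry is free to be made sufficiently negative to accommodate the sum-zero condition $\sum_{j\geq 2}t^{(j)}=0$; $\alpha$ (and hence $S$) is then taken large enough that all $c_{k}>0$. The resulting vector $\sum c_{k}\mathbf{r}_{k}$ lies in $C_{\pi}$, and $\mathbf{w}(\pi):=\sum c_{k}\mathbf{v}_{k}\in W$ by the convex cone property, completing the argument for this $\pi$. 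The ``infinitely many profiles'' clause is inherited directly from Theorem~\ref{thm:main}. The main technical obstacle is the bookkeeping in this last step: verifying, for each admissible position $k_{0}$ and each ordering $\pi$ of the remaining candidates, that a sum-zero vector $\mathbf{t}$ with the prescribed sign pattern relative to the threshold $\e nS$ can be written down explicitly in a way that works uniformly across all $k_{0}\in\{1,\ldots,n-1\}$.
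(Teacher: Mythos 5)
Your architecture is sound and genuinely different from the paper's: you feed the extreme rays $\mathbf{v}_k$ of $\overline{W}$ into the $\overline{W}$-extension of Theorem~\ref{thm:main} and decompose the target results as ``dominance direction $\e\mathbf{d}$ plus a transverse component in $V_0\cap\{x_1=0\}$,'' whereas the paper takes $\mathbf{r}_k=\mathbf{e}_k-\mathbf{e}_{k+1}$ and builds conical combinations from telescoped differences $\mathbf{e}_i-\mathbf{e}_j$, settling $\pi(n)=n$ first and correcting for general $\pi(n)=b$. But there is a genuine gap at exactly the point you call ``bookkeeping,'' and it is a circularity rather than mere verification. With your $\mathbf{s}_k$ one computes $t^{(k+1)}=d_k=c_k-\alpha$, so your two requirements are (i) $c_k>0$ for all $k$, i.e.\ $\alpha>-\min_{j\ge2}t^{(j)}$, and (ii) $t^{(j)}>\e nS=\e n(n-1)\alpha$ for each of the $k_0-1$ candidates ranked above candidate $1$. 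These fight each other when $k_0\ge2$: the entries above the threshold sum to more than $(k_0-1)\e n(n-1)\alpha$, so by $\sum_{j\ge2}t^{(j)}=0$ the most negative entry satisfies $\min_j t^{(j)}<-(k_0-1)\e n(n-1)\alpha/(n-k_0)$, and then (i) forces $(k_0-1)\e n(n-1)/(n-k_0)<1$. Hence ``select $\mathbf{t}$, then take $\alpha$ large enough that all $c_k>0$'' does not close the loop---enlarging $\alpha$ enlarges the threshold $\e nS$ and breaks the inequalities you already imposed on $\mathbf{t}$. The construction genuinely fails for generic $\e$: for $n=3$, $\e=1$, $k_0=2$, candidate $j+1$ beats candidate $1$ iff $c_j>S\big(\e n+\tfrac{1}{n-1}\big)=3.5\,S$, which is impossible when the $c_j$ are positive and sum to $S$.

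The fix is quantitative and must be made explicit: the feasibility condition $(k_0-1)\e n(n-1)<n-k_0$ holds for every $k_0\le n-1$ iff $\e<\tfrac{1}{n(n-1)(n-2)}$, so fix such an $\e$ \emph{before} invoking Theorem~\ref{thm:main}, and then for each $\pi$ choose the $c_k$ directly (place the $k_0-1$ ``winning'' coordinates just above $\theta S$ with $\theta=\e n+\tfrac{1}{n-1}$, and distribute the remaining mass below $\theta S$ in the order dictated by $\pi$), rather than choosing $\mathbf{t}$ first and $\alpha$ second. This step is the analogue of the paper's pre-scaling of $\mathbf{w}_1$ so that $\mathbf{w}_1-n\binom{n+1}{2}\mathbf{w}_k\in W$: both proofs must reconcile ``stay inside the cone $W$'' with ``overshoot a threshold in results space,'' and that reconciliation is where the work lies. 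The rest of your proposal---the extreme-ray expansion of $W$, the reduction to conical combinations of the $\mathbf{r}_k$, the observation that candidate $1$ can never finish last, and the inheritance of ``infinitely many profiles'' from Theorem~\ref{thm:main}---is correct.
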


\begin{proof}
To begin, let $\mathbf{w}_{1},\ldots,\mathbf{w}_{n-1}$ be any linearly independent vectors in $W$. By 
Lemma~\ref{scaling lemma}, we may scale $\mathbf{w}_{1}$ so that $\mathbf{w}_{1}-n\binom{n+1}{2}\mathbf{w}_{k}\in W$ 
for $k=2,\ldots,n-1$. (This will be important later.) Set $\mathbf{f}_{k}=\mathbf{e}_{k}-\mathbf{e}_{k+1}$ for $k=1,\ldots,n-1$ where 
$\mathbf{e}_{1},\ldots,\mathbf{e}_{n}$ are the standard basis vectors in $\Q^{n}$, and let $\mathbf{p}$ be such that 
$T_{\mathbf{w}_{k}}\mathbf{p} = \mathbf{f}_{k}$ for each $k$. There are infinitely many such $\mathbf{p}$ by Theorem~\ref{thm:main}.

Fix $\pi\in S_{n}$ with $\pi(n) \neq1$. The result will follow if we can find $\alpha_{1},\ldots,\alpha_{n-1}$ so that 

\vspace{-.6cm}

\[
\mathbf{w}=\mathbf{w}(\pi)=\sum_{k=1}^{n-1}\alpha_{k}\mathbf{w}_{k}
\]
belongs to $W$ and

\vspace{-.6cm}

\[
\mathbf{s}=\mathbf{s}(\pi)=T_{\mathbf{w}}\mathbf{p}=Q_{\mathbf{p}}\mathbf{w}=\sum_{k=1}^{n-1}\alpha_{k}\mathbf{f}_{k}
\]
belongs to $C_{\pi}$.

Now for $1\leq i<j\leq n$, define $\mathbf{f}_{ij}=\mathbf{e}_{i}-\mathbf{e}_{j}=\sum_{k=i}^{j-1}\mathbf{f}_{k}$ and 
$\mathbf{w}_{ij}=\sum_{k=i}^{j-1}\mathbf{w}_{k}$. The latter are contained in $W$ since it is a convex cone. For any collection of numbers 
$\{\beta_{ij}\}_{i<j}$, if we define $\alpha_{k}=\sum_{i=1}^k \sum_{j=k+1}^n \beta_{ij}$, then we have
\[
\sum_{i<j}\beta_{ij}\mathbf{f}_{ij}=\sum_{i<j}\beta_{ij}\sum_{k=i}^{j-1}\mathbf{f}_{k}
=\sum_{k=1}^{n-1}\alpha_{k}\mathbf{f}_{k}
\]
and 

\vspace{-.6cm}

\[
\sum_{i<j}\beta_{ij}\mathbf{w}_{ij}=\sum_{k=1}^{n-1}\alpha_{k}\mathbf{w}_{k}.
\]

Accordingly, it suffices to construct $\mathbf{w}=\sum_{i<j}\beta_{ij}\mathbf{w}_{ij}\in W$ with 
$\mathbf{s}=Q_{\mathbf{p}}\mathbf{w}=\sum_{i<j}\beta_{ij}\mathbf{f}_{ij}\in C_{\pi}$.
Note that each $\mathbf{w}_{ij}$ is in $W$, so $\mathbf{w}$ will be as well whenever the $\beta_{ij}$'s are 
nonnegative (and not all $0$).

First consider the case in which $\pi(n)=n$. Then we can take $\beta_{kn}=n-\pi^{-1}(k)$ for $k=1,\ldots,n-1$ 
and $\beta_{ij}=0$ for $j\neq n$. As all $\beta_{ij}$ are nonnegative, $\mathbf{w}=\sum_{i<j}\beta_{ij}\mathbf{w}_{ij}\in W$. To see that 
$\mathbf{s} \in C_{\pi}$, we observe that $s_k = n-\pi^{-1}(k)$ for $k=1, \ldots, n-1$ and $s_n = -\sum_{k=1}^{n}s_{k}$. This gives the $k^{\text{th}}$ place candidate 
$s_{\pi(k)}=n-\pi^{-1}(\pi(k))=n-k>0$ points for $k=1,\ldots,n-1$ and gives $-\binom{n}{2}<0$ points to 
candidate $n$. 

If $\pi(n) = b$ with $1<b<n$, let $\widetilde{\pi}$ be the permutation formed from $\pi$ by moving $n$ to last place (so 
$\widetilde{\pi}(i)=\pi(i)$ for $i<\pi^{-1}(n)$, $\widetilde{\pi}(i)=\pi(i+1)$ for $\pi^{-1}(n)\leq i<n$, 
and $\widetilde{\pi}(n)=n$), and let $\widetilde{\mathbf{w}}=\mathbf{w}(\widetilde{\pi})$, 
$\widetilde{\mathbf{s}}=\mathbf{s}(\widetilde{\pi})$ be constructed as above. Now set 
$\mathbf{w}=\widetilde{\mathbf{w}}-\gamma_{bn}\mathbf{w}_{bn}$ where

\vspace{-.6cm}

\[
\gamma_{bn}=\binom{n}{2}+n-\pi^{-1}(n)+\frac{1}{2}.
\] 
Then $\mathbf{s}=Q_{\mathbf{p}}\mathbf{w}=\widetilde{\mathbf{s}}-\gamma_{bn}\mathbf{f}_{bn}$, and we will be done upon establishing 
that $\mathbf{s} \in C_{\pi}$ and $\mathbf{w} \in W$. 

We first note that $\widetilde{\mathbf{s}} \in C_{\widetilde{\pi}}$, and $\mathbf{s}$ differs 
from $\widetilde{\mathbf{s}}$ by adding $\gamma_{bn}$ in position $n$ and subtracting $\gamma_{bn}$ in position $b$. As a result,
candidate $n$ now has $n-\pi^{-1}(n)+\frac{1}{2}$ points, candidate $b$ has a negative number of points, and all 
other candidates have the same point values as in $\widetilde{\mathbf{s}}$. It follows that $\mathbf{s} \in C_\pi$.

Now we turn our attention to $\mathbf{w}$, showing that it can be written as a linear combination of vectors in $W$ with nonnegative coefficients. 
Recall that $\mathbf{w}_{1}$ is scaled so that $\mathbf{w}_{1}-n\binom{n+1}{2}\mathbf{w}_{k}\in W$ for $k=2,\ldots,n-1$. Also, since  $1<b<n$, 
we have
\[ 
(n-b)\gamma_{bn}=(n-b)\left[\binom{n+1}{2}-\pi^{-1}(n)+\frac{1}{2}\right]<n\binom{n+1}{2}.
\]  
Thus for each $1<k<n$, $\mathbf{w}_{1}-(n-b)\gamma_{bn}\mathbf{w}_{k}\in W$ since it is obtained by adding a 
positive multiple of $\mathbf{w}_{k}$ to  $\mathbf{w}_{1}-n\binom{n+1}{2}\mathbf{w}_{k}$.
This shows that
\[
\mathbf{w}_{1}-\gamma_{bn}\mathbf{w}_{bn}=\mathbf{w}_{1}-\gamma_{bn}\sum_{k=b}^{n-1}\mathbf{w}_{k}
=\frac{1}{n-b}\sum_{k=b}^{n-1}\Big(\mathbf{w}_{1}-(n-b)\gamma_{bn}\mathbf{w}_{k}\Big)\in W.
\]
To complete the proof, note that $\widetilde{\mathbf{w}}=\sum_{k=1}^{n-1}\big(n-\widetilde{\pi}^{-1}(k)\big)\mathbf{w}_{k}$ 
and $n-\widetilde{\pi}^{-1}(k)\geq1$ for all $k<n$, so 
\begin{align*}
\mathbf{w} & =\widetilde{\mathbf{w}}-\gamma_{bn}\mathbf{w}_{bn}\\
 & =(n-\widetilde{\pi}^{-1}(1)-1)\mathbf{w}_{1}
+\sum_{k=2}^{n-1}\big(n-\widetilde{\pi}^{-1}(k)\big)\mathbf{w}_{k}
+\big(\mathbf{w}_{1}-\gamma_{bn}\mathbf{w}_{bn}\big)
\end{align*}
is a conical combination of vectors in $W$ and so belongs to $W$. 
\end{proof}

\begin{remark}
The $\pi(n)=n$ part of the above argument can be interpreted as saying that if we have $n-1$ ``serious 
candidates,'' then by introducing ``dummy candidate'' $n$, who is assured to lose, there are profiles that 
achieve any relative ordering of the serious candidates by choosing appropriate weighting vectors. 

Note that one can always left-multiply $Q_{\mathbf{p}}$ by a permutation matrix to relabel the candidates, so there is 
nothing special about $n$.
\end{remark}

It is often more convenient to work with $Q_{\mathbf{p}}$ than $T_{\mathbf{w}}$, so we take a moment to observe 
that if one only cares about the ordinal rankings of candidates, then it can always be assumed that each 
profile consists of nonnegative integers or that the matrix $Q_{\mathbf{p}}$ is doubly stochastic.

\begin{prop}
\label{convenient p} 
Using the notation from above,
\begin{enumerate}
\item For any $\mathbf{p}\in\Q^{n!}$, there exists $\widehat{\mathbf{p}}\in\N_{0}^{n!}$ with 
$Q_{\widehat{\mathbf{p}}}\mathbf{w}$ lying in the same face as $Q_{\mathbf{p}}\mathbf{w}$ for all 
$\mathbf{w}\in \overline{W}$.

\item For any $\mathbf{p}\in\Q^{n!}$, there exists $\widetilde{\mathbf{p}}\in\Q^{n!}$ such that 
$\widetilde{p}_{\ell}\geq 0$ for all $\ell$, $\sum_{\ell=1}^{n!}\widetilde{p}_{\ell}=1$, and 
$Q_{\widetilde{\mathbf{p}}}\mathbf{w}$ lies in the same face as $Q_{\mathbf{p}}\mathbf{w}$ for all 
$\mathbf{w}\in \overline{W}$.
\end{enumerate}
\end{prop}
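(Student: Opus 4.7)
The plan is to exploit two invariances of the map $\mathbf{p}\mapsto Q_{\mathbf{p}}\mathbf{w}$ restricted to $\mathbf{w}\in\overline{W}$. First, scaling $\mathbf{p}$ by a positive rational $\alpha$ scales $Q_{\mathbf{p}}\mathbf{w}$ by the same factor, which preserves faces since each face is closed under positive dilations. Second, adding a scalar multiple of $\one$ to $\mathbf{p}$ leaves $Q_{\mathbf{p}}\mathbf{w}$ unchanged for every $\mathbf{w}\in\overline{W}$. To justify this, I would note that $\sum_{\ell=1}^{n!}R_{\ell}=(n-1)!\,J$, since the $(i,j)$-entry counts permutations with $\sigma(j)=i$, of which there are $(n-1)!$. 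Hence $Q_{\mathbf{p}+c\one}=Q_{\mathbf{p}}+c(n-1)!\,J$, and because $\overline{W}\subset V_{0}$, we have $J\mathbf{w}=\zero$ for every $\mathbf{w}\in\overline{W}$, so $Q_{\mathbf{p}+c\one}\mathbf{w}=Q_{\mathbf{p}}\mathbf{w}$.

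For part (1), I would take $c=\max\{0,-\min_{\ell}p_{\ell}\}\in\Q$ so that $\mathbf{p}+c\one$ has all nonnegative rational entries, let $d\in\N$ be a common denominator for these entries, and set $\widehat{\mathbf{p}}=d(\mathbf{p}+c\one)\in\N_{0}^{n!}$. Combining the two invariances gives $Q_{\widehat{\mathbf{p}}}\mathbf{w}=d\,Q_{\mathbf{p}}\mathbf{w}$ for all $\mathbf{w}\in\overline{W}$, which lies in the same face.

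For part (2), I would look for $\widetilde{\mathbf{p}}=\alpha\mathbf{p}+\beta\one$ with $\alpha>0$ rational, $\alpha p_{\ell}+\beta\geq0$ for every $\ell$, and $\alpha\sum_{\ell}p_{\ell}+\beta\,n!=1$. After eliminating $\beta=(1-\alpha\sum_{\ell}p_{\ell})/n!$, the nonnegativity constraint becomes $\alpha\sum_{\ell}(p_{\ell}-\min_{k}p_{k})\leq 1$, which any sufficiently small $\alpha>0$ satisfies (and is vacuous if $\mathbf{p}$ is constant, including the case $\mathbf{p}=\zero$, where $\widetilde{\mathbf{p}}=(1/n!)\one$ works directly). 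The invariances then yield $Q_{\widetilde{\mathbf{p}}}\mathbf{w}=\alpha Q_{\mathbf{p}}\mathbf{w}$, lying in the same face.

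The main obstacle is essentially just bookkeeping: confirming $\sum_{\ell}R_{\ell}=(n-1)!\,J$ and solvability of the $(\alpha,\beta)$ system in every degenerate case. The genuine conceptual ingredient is the observation that adding a uniform multiple of $\one$ to $\mathbf{p}$ corresponds to tacking on ``complete tie'' profiles that each sum-zero weighting vector scores at zero, after which everything reduces to clearing denominators and normalizing.
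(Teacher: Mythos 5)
Your argument is correct. For part (1) it is essentially the paper's own proof: the paper sets $x=\min_{\ell}p_{\ell}$, takes $\widehat{\mathbf{p}}=d(\mathbf{p}-x\one)$, and justifies the invariance via the identity $T_{\mathbf{w}}\one=\zero$, which is exactly your observation that $\sum_{\ell}R_{\ell}=(n-1)!\,J$ combined with $J\mathbf{w}=\zero$ for $\mathbf{w}\in V_{0}\supseteq\overline{W}$.

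For part (2) you take a genuinely different and more elementary route. The paper operates on the matrix rather than on the profile: it sets $m=\min_{i,j}Q_{\mathbf{p}}(i,j)$, rescales $Q_{\mathbf{p}}-mJ$ to a doubly stochastic matrix $\widetilde{Q}_{\mathbf{p}}$, and then invokes the Birkhoff--von Neumann theorem to produce a nonnegative $\widetilde{\mathbf{p}}$ summing to $1$ with $Q_{\widetilde{\mathbf{p}}}=\widetilde{Q}_{\mathbf{p}}$. Your affine change $\widetilde{\mathbf{p}}=\alpha\mathbf{p}+\beta\one$ constructs the probability vector directly and avoids Birkhoff--von Neumann entirely; your solvability analysis of the $(\alpha,\beta)$ system, including the constant-profile case, is right, and the resulting $Q_{\widetilde{\mathbf{p}}}$ is still doubly stochastic with $Q_{\widetilde{\mathbf{p}}}\mathbf{w}=\alpha Q_{\mathbf{p}}\mathbf{w}$. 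This is a genuine simplification for the proposition as stated. What the paper's version buys is exploited later in the text: its $\widetilde{Q}_{\mathbf{p}}$ has at least one entry equal to zero (or all entries equal), which is what allows the authors to cite Brualdi's result to conclude that a Birkhoff decomposition of size at most $(n-1)^{2}$ exists, i.e., that only that many distinct preference orders are needed. Your construction does not guarantee a zero entry of $Q_{\widetilde{\mathbf{p}}}$, so it proves the proposition but would not by itself support that subsequent remark.
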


\begin{proof}
For the first claim, set $x=\min_{\ell}p_{\ell}$, let $d$ be the least common denominator 
of $p_{1},\ldots,p_{n!}$, and define $\widehat{\mathbf{p}}=d(\mathbf{p}-x\one)\in\N_{0}^{n!}$.
If $Q_{\mathbf{p}}\mathbf{w}=T_{\mathbf{w}}\mathbf{p}=\mathbf{r}$, then 
\[
Q_{\widehat{\mathbf{p}}}\mathbf{w}=T_{\mathbf{w}}d(\mathbf{p}-x\one)
=dT_{\mathbf{w}}\mathbf{p}-dxT_{\mathbf{w}}\one=d\mathbf{r}
\]
since $T_{\mathbf{w}}\one=\zero$. Thus $Q_{\widehat{\mathbf{p}}}\mathbf{w}$ is a positive multiple of 
$Q_{\mathbf{p}}\mathbf{w}$ and so lies in the same face.

For the second claim, set $m=\min_{i,j}Q_{\mathbf{p}}(i,j)$, $c=\big(\sum_{\ell=1}^{n!}p_{\ell}-mn\big)^{-1}>0$, 
and $\widetilde{Q}_{\mathbf{p}}=c(Q_{\mathbf{p}}-mJ)$ where $J$ is the all ones matrix. (Here we are assuming that 
$Q_{\mathbf{p}}$ does not have all entries equal. If it does, one can just take $\widetilde{Q}_{\mathbf{p}}=\tfrac{1}{n}J$.) 
Then $\widetilde{Q}_{\mathbf{p}}$ is nonnegative with rows and columns summing to $1$, so the Birkhoff--von Neumann theorem 
guarantees the existence of a nonnegative $\widetilde{\mathbf{p}}\in\Q^{n!}$ with entries summing to $1$ that satisfies 
$Q_{\widetilde{\mathbf{p}}}=\widetilde{Q}_{\mathbf{p}}$. The claim follows since
\[
\widetilde{Q}_{\mathbf{p}}\mathbf{w}=c(Q_{\mathbf{p}}-mJ)\mathbf{w}
=cQ_{\mathbf{p}}\mathbf{w}-mcJ\mathbf{w}=cQ_{\mathbf{p}}\mathbf{w}
\]
lies in the same face as $Q_{\mathbf{p}}\mathbf{w}$.
\end{proof}

The first part of Proposition~\ref{convenient p} is only helpful inasmuch as one might object to having negative or fractional 
votes cast. The second part is useful from a more mathematical perspective: Since $W$ is a convex cone, its image under 
$Q_{\mathbf{p}}$ is the same as its image under the doubly stochastic matrix $Q_{\widetilde{\mathbf{p}}}$. 
Thus we can study possible ordinal outcomes of positional voting procedures by looking at the image of $W$ (or its closure 
$\overline{W}$) under multiplication by doubly stochastic matrices. An example of the utility of this observation is that, by 
Proposition~\ref{lin comb}, any set of $(n-1)^{2}+1$ linearly independent permutation matrices can give rise to all paradoxical profiles 
for positional voting procedures. In other words, one needs only this many distinct preferences among the electorate. In fact, the construction 
from Proposition~\ref{convenient p} shows that one may take the doubly stochastic matrix to have at least one entry equal to zero (or all 
entries equal), and a result from \cite{Brua} then implies that a Birkhoff--von Neumann decomposition of size at most $(n-1)^{2}$ exists. 
This is the content of Theorem 4 in \cite{Saari}.

The final result of this section uses the doubly stochastic matrix perspective to show that a profile can give rise to at most 
$\frac{n-1}{n}n!$ strict societal rankings; Theorem~\ref{number rankings lower bound} shows that this is sharp. The result was 
first proved in \cite{Saari} by analyzing certain simplicial constructs geometrically. Our strategy is to show that the set of possible 
societal rankings for any profile lies in a closed half-space whose boundary contains the origin, and then argue that the complement 
of this half-space properly contains at least $(n-1)!$ of the $n!$ chambers $C_\pi$ (corresponding to impossible rankings).

\begin{theorem}
\label{number rankings upper bound} \textnormal{(\cite[Theorem 3a]{Saari})} 
For any $\mathbf{p}\in\Q^{n!}$, there are at most $n!-(n-1)!$ permutations $\pi\in S_{n}$ such that 
$T_{\mathbf{w}}\mathbf{p}\in C_{\pi}$ for some $\mathbf{w}\in\overline{W}$.
\end{theorem}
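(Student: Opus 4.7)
The plan follows the two-step strategy sketched just before the theorem. First, I would invoke Proposition~\ref{convenient p} to replace $\mathbf{p}$ by $\widetilde{\mathbf{p}}$ so that $Q:=Q_{\widetilde{\mathbf{p}}}$ is doubly stochastic, without changing which rankings are achievable. The set of possible results vectors is then $\mathcal{R}=Q(\overline{W})$, equal to the conical hull of the $n-1$ vectors $Q\mathbf{u}_{1},\ldots,Q\mathbf{u}_{n-1}\in V_{0}$, where $\mathbf{u}_{k}=\mathbf{e}_{1}+\cdots+\mathbf{e}_{k}-\tfrac{k}{n}\one$ are the extreme rays of $\overline{W}$. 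Since $V_{0}$ is $(n-1)$-dimensional and the conical hull of $d$ vectors in $\mathbb{R}^{d}$ is always a proper subset (at least $d+1$ vectors are needed to sweep out the whole space), the polar cone of $\mathcal{R}$ in $V_{0}$ is nonzero. I can therefore pick a nonzero $\mathbf{a}\in V_{0}$ with $\mathbf{a}^{\T}\mathbf{r}\geq 0$ for every $\mathbf{r}\in\mathcal{R}$, giving the desired closed half-space $\mathcal{R}\subseteq\big\{\mathbf{x}\in V_{0}\st\mathbf{a}^{\T}\mathbf{x}\geq 0\big\}$ with boundary through the origin.

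Second, I would show that the open complement $\big\{\mathbf{a}^{\T}\mathbf{x}<0\big\}$ contains at least $(n-1)!$ chambers $C_{\pi}$. Parameterizing $C_{\pi}$ by $y_{k}=x_{\pi(k)}$, so that $\mathbf{y}$ ranges over $W$, converts $\mathbf{a}^{\T}\mathbf{x}$ into $\sum_{k}a_{\pi(k)}y_{k}$, and evaluating on the extreme rays $\mathbf{u}_{K}$ (for $K=1,\ldots,n-1$) shows that this is positive throughout $C_{\pi}$ precisely when the partial sums $\sum_{k=1}^{K}a_{\pi(k)}$ are all nonnegative. To count such $\pi$, I would appeal to the zero-sum cycle lemma: for any real sequence summing to zero, the cyclic rotation starting just after an index achieving the minimum prefix sum has all prefix sums nonnegative. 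Because cyclic rotation acts freely on $S_{n}$ (permutations in one-line notation cannot be cyclically periodic), the $n!$ permutations split into exactly $(n-1)!$ orbits of size $n$, each contributing at least one $\pi$ with $C_{\pi}\subseteq\big\{\mathbf{a}^{\T}\mathbf{x}>0\big\}$. The involution $\pi\mapsto\pi^{\mathrm{rev}}$, defined by $\pi^{\mathrm{rev}}(k)=\pi(n-k+1)$, sends a partial sum $S_{K}$ of $(a_{\pi(k)})$ to $-S_{n-K}$ (using $\sum_{i}a_{i}=0$) and thus bijects the chambers in $\big\{\mathbf{a}^{\T}\mathbf{x}>0\big\}$ onto those in $\big\{\mathbf{a}^{\T}\mathbf{x}<0\big\}$, giving at least $(n-1)!$ chambers on each side. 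Every chamber in this open complement is disjoint from $\mathcal{R}$, so the corresponding ranking is unachievable, yielding the bound $n!-(n-1)!$.

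The half-space step and the reduction to doubly stochastic $Q$ are essentially routine. The main obstacle is the combinatorial lower bound: verifying, via the cycle lemma, that any nonzero linear functional on $V_{0}$ leaves at least $(n-1)!$ chambers strictly on each side, and setting up the partial-sum characterization of the chambers cleanly enough for the orbit-counting argument to be rigorous.
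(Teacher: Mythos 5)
Your proposal is correct and follows the same two-step outline as the paper's proof (confine the possible results vectors to a closed half-space through the origin, then count at least $(n-1)!$ chambers strictly on the other side via prefix sums and the cycle lemma), but the two steps are executed differently enough to be worth comparing. For the half-space, the paper shows $T\left(\overline{W}\right)$ is a proper convex subset of $V_{0}$ with $\zero$ on its boundary (splitting into the bijective and non-bijective cases) and then invokes the supporting hyperplane theorem; you instead use the extreme-ray description $\overline{W}=\mathrm{cone}(\mathbf{u}_{1},\ldots,\mathbf{u}_{n-1})$ (this is Proposition~\ref{basis}, which in the paper appears only in Section~4, so you would need to prove or cite it here) together with the fact that $n-1$ vectors cannot positively span the $(n-1)$-dimensional space $V_{0}$, so the finitely generated cone $Q\left(\overline{W}\right)$ is proper and closed and has a nonzero polar. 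This is arguably cleaner and case-free, at the cost of front-loading the extreme-ray computation. Your partial-sum criterion for a chamber to lie strictly on one side of the functional is the same as the paper's identity \eqref{eq: inner product with partial sums}, just derived by testing on extreme rays rather than by Abel summation. For the count, the paper rotates each $\pi$ fixing $n$ to a cyclic shift with all prefix sums nonpositive (one per orbit, hence $(n-1)!$ chambers where the functional is negative, directly); you produce $(n-1)!$ chambers where it is \emph{positive} and then transport them to the negative side with the reversal involution $\pi\mapsto\pi^{\mathrm{rev}}$, using $S_{K}\mapsto -S_{n-K}$. That extra step is correct but unnecessary---choosing the rotation at the \emph{maximal} prefix sum, as the paper does, lands you on the negative side immediately. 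Your orbit-counting justification (cyclic rotation acts freely on $S_{n}$, so there are exactly $(n-1)!$ orbits) is equivalent to the paper's indexing of orbits by $\pi'\in S_{n-1}$. All the facts you lean on (finitely generated cones are closed, a proper closed convex cone has nonzero polar, the cycle lemma) are standard, so I see no gap.
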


\begin{proof}
Given any profile $\mathbf{p}\in\Q^{n!}$, there is a doubly stochastic matrix $Q_{\widetilde{\mathbf{p}}}$ such that the possible 
strict societal rankings arising from $\mathbf{p}$ are precisely those $\pi\in S_{n}$ for which 
$C_{\pi}\cap Q_{\widetilde{\mathbf{p}}}\overline{W}\neq\emptyset$. Equivalently, since doubly stochastic matrices map sum-zero 
vectors to sum-zero vectors, writing $T:V_{0}\rightarrow V_{0}$ for the linear map defined by 
$T(\mathbf{v})=Q_{\widetilde{\mathbf{p}}}\mathbf{v}$ and denoting $\widetilde{C}_{\pi}=C_{\pi}\cap V_{0}$, $\pi$ is a possible 
ranking for $\mathbf{p}$ if and only if $\widetilde{C}_{\pi}\cap T\left(\overline{W}\right)\neq\emptyset$. 

Now $T\left(\overline{W}\right)$ is the linear image of a convex set and thus is convex. Also, $T\left(\overline{W}\right)$ is a 
proper subset of $V_{0}$ because if $T$ is not bijective then $T\left(\overline{W}\right)\subseteq T\left(V_{0}\right)\subset V_{0}$, 
and if $T$ is bijective then 
\[
T\left(\overline{W}\right)\cap-T\left(\overline{W}\right)
=T\left(\overline{W}\right)\cap T\left(-\overline{W}\right)\subseteq T(\overline{W}\cap-\overline{W})
=\{\zero\}.
\] 
This means that there is some $\mathbf{v}\in V_{0}\setminus T\left(\overline{W}\right)$, hence 
$\e \mathbf{v}\in V_{0}\setminus T\left(\overline{W}\right)$ for every $\e>0$, so $\zero\in\partial\,T\left(\overline{W}\right)$. 
Accordingly, the supporting hyperplane theorem shows that there is a hyperplane $H$ through the origin in $V_{0}$ with 
$T\left(\overline{W}\right)$ contained entirely in the associated closed positive half-space \cite{Lang}. 
In particular, there is a nonzero $\mathbf{h}\in V_{0}$ such that $\pi$ is a possible ranking for $\mathbf{p}$ only if there is an 
$\mathbf{r}\in \widetilde{C}_{\pi}$ such that $\langle \mathbf{h},\mathbf{r}\rangle \geq 0$.

We will establish the result by showing that this constraint precludes $(n-1)!$ rankings from being possible outcomes associated with 
$\mathbf{p}$. To this end, oberve that $\left\langle \mathbf{h},\mathbf{r}\right\rangle <0$ for all $\mathbf{r}\in \widetilde{C}_{\pi}$ 
if and only if $\left\langle \pi\mathbf{h},\mathbf{w}\right\rangle <0$ for all $\mathbf{w}\in W$ where 
$\pi\mathbf{h}=[\,\setlength\arraycolsep{3pt}\begin{matrix} h_{\pi^{-1}(1)} & \cdots & h_{\pi^{-1}(n)}\end{matrix}\,]^\T$, 
so it suffices to show that there are at least $(n-1)!$ ways to permute the entries of $\mathbf{h}$ so that it has negative inner product with
every vector in $W$. 

Now if $\mathbf{v},\mathbf{w}\in V_{0}$, then 
\begin{equation}
\label{eq: inner product with partial sums}
\begin{aligned}
\hspace{-.15cm}\left\langle \mathbf{v},\mathbf{w}\right\rangle  & =\sum_{k=1}^{n}v_{k}w_{k}
=v_{1}(w_{1}-w_{2})+(v_{1}+v_{2})(w_{2}-w_{3})\\
 & \:\;+(v_{1}+v_{2}+v_{3})(w_{3}-w_{4})+\cdots+(v_{1}+\cdots+v_{n-1})(w_{n-1}-w_{n}), 
\end{aligned}
\end{equation}
as there is cancellation among successive terms and $-w_{n}(v_{1}+\cdots+v_{n-1})=w_{n}v_{n}$.
If $\mathbf{w}\in W$, then $w_{k}-w_{k+1}>0$ for each $k=1,\ldots,n-1$, so \eqref{eq: inner product with partial sums} shows that
$\left\langle \mathbf{v},\mathbf{w}\right\rangle <0$ whenever the partial sums satisfy $\sum_{k=1}^{m}v_{k}\leq0$ for $m=1,\ldots,n-1$. 
(At least one of these inequalities would have to be strict for $\mathbf{v}\in V_{0}\setminus\{\zero\}$.)

Combining these observations, we conclude that there are at least $(n-1)!$ impossible rankings as long as there are $(n-1)!$ ways to permute 
the entries of $\mathbf{h}$ so that the partial sums are all nonpositive. To see that this is so, let $\pi'\in S_{n-1}$ and define $\pi\in S_{n}$ 
by $\pi(k)=\pi'(k)$ for $k<n$ and $\pi(n)=n$. Define $\pi_{m}\in S_{n}$ by $\pi_{m}(k)=\pi(k+m)$ for $k=1,\ldots,n$ and $m=0,1,\ldots,n-1$, 
where the addition in the argument is performed modulo $n$. The assertion will follow if we can show that at least one of these cyclic 
shifts of $\pi$ has the property that $s_{k}(m):=\sum_{j=1}^{k}h_{\pi_{m}(j)}\leq0$ for $k=1,\ldots,n-1$.  

For this, choose $m$ so that $s_{m}(0)=\sum_{j=1}^{m}h_{\pi(j)}$ is maximal. We claim that $s_{k}(m)\leq0$ for all $1\leq k<n$. 
Indeed, for $1\leq k\leq n-m$, 
\[
s_{k}(m)=\sum_{j=m+1}^{m+k}h_{\pi(j)}\leq 0
\]
by maximality of $s_{m}(0)$. Also, $\mathbf{h}\in V_{0}$ implies that $\sum_{j=m+1}^{n}h_{\pi(j)}=-s_{m}(0)$, so for 
$n-m<\ell\leq n-1$, we have 
\[
s_{\ell}(m)=\sum_{j=m+1}^{n}h_{\pi(j)}+\sum_{i=1}^{\ell+m+1-n}h_{\pi(i)}\leq\sum_{j=m+1}^{n}h_{\pi(j)}+s_{m}(0)=0.
\]
This completes the claim and the proof.
\end{proof}

\begin{remark}
\label{rem: hyperplane}
Demonstrating that at least $(n-1)!$ chambers lie on the opposite side of $H$ from $T\left(\overline{W}\right)$ was a bit involved because 
we sought to keep the discussion self-contained. However, if one brings the full power of the theory of hyperplane arrangements to 
bear on the problem (which is one of the advantages of introducing this perspective), then much more efficient arguments are possible. 
For instance, Corollary 14.2 and equation (6.9) in \cite{AguMah} give the number of chambers in a \emph{generic half-space} of the braid 
arrangement---that is, a half-space defined by a hyperplane through the origin that does not contain any one-dimensional faces---as $(n-1)!$. 
Since $H$ can be perturbed so as to be generic and this can only decrease the number of chambers properly contained on either side, the 
desired inequality follows immediately.
\end{remark}


\section{Choosing Weighting Vectors}
\label{Weighting Vectors}

In this final section, we characterize the possible results vectors arising from a given profile $\mathbf{p}$ as the conical hull of a 
set of vectors constructed from the columns of $Q_{\mathbf{p}}$. The construction is surprisingly simple and provides a 
straightforward procedure to reverse engineer an election by selecting desirable weights for a given profile.

We begin by providing a convenient description of the space of (nonstrict) weighting vectors $\overline{W}$. Define 
$\mathbf{v}_{1},\ldots,\mathbf{v}_{n-1}\in\Q^{n}$ by $\mathbf{v}_{k}=\frac{k}{n}\one-\sum_{j=n-k+1}^{n}\mathbf{e}_{j}$, 
so that 
\begin{align*}
\mathbf{v}_{1} & =  \big[\,\setlength\arraycolsep{3pt}\begin{matrix}
\frac{1}{n} & \cdots & \frac{1}{n} & -\frac{n-1}{n}
\end{matrix}\,\big]^{\T}\\
\mathbf{v}_{2} & =\big[\,\setlength\arraycolsep{3pt}\begin{matrix}
\frac{2}{n} & \cdots & \frac{2}{n} & -\frac{n-2}{n} & -\frac{n-2}{n}
\end{matrix}\,\big]^{\T}\\
 & \qquad\qquad\qquad\quad \vdots \\
\mathbf{v}_{n-2} & =\big[\,\setlength\arraycolsep{3pt}\begin{matrix}
\frac{n-2}{n} & \frac{n-2}{n} & -\frac{2}{n} & \cdots &  -\frac{2}{n}
\end{matrix}\,\big]^{\T}\\
\mathbf{v}_{n-1} & =\big[\,\setlength\arraycolsep{3pt}\begin{matrix}
\frac{n-1}{n} & -\frac{1}{n} & \cdots &  -\frac{1}{n}
\end{matrix}\,\big]^{\T}.
\end{align*}

\begin{prop}
\label{basis}
Let $\mathbf{v}_{1},\ldots,\mathbf{v}_{n-1}$ be as above. Then
\[
\overline{W}=\big\{c_{1}\mathbf{v}_{1}+\cdots+c_{n-1}\mathbf{v}_{n-1}\st c_{1},\ldots,c_{n-1}\geq 0\big\}.
\]
\end{prop}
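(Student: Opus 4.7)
The plan is to prove the two inclusions separately, using the observation that $\overline{W}$ is a convex cone (so any conical combination of elements of $\overline{W}$ stays in $\overline{W}$).

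First I would verify that each $\mathbf{v}_k$ itself lies in $\overline{W}$. The vector $\mathbf{v}_k$ has its first $n-k$ entries equal to $\tfrac{k}{n}$ and its last $k$ entries equal to $-\tfrac{n-k}{n}$, so its entries are weakly decreasing, and its coordinates sum to $(n-k)\cdot\tfrac{k}{n}+k\cdot\bigl(-\tfrac{n-k}{n}\bigr)=0$. Thus $\mathbf{v}_k \in \overline{W}$ for each $k$. Because $\overline{W}$ is closed under nonnegative linear combinations (the defining inequalities $x_i \geq x_{i+1}$ and the equation $\sum x_i = 0$ are both preserved), the inclusion
\[
\{c_1 \mathbf{v}_1 + \cdots + c_{n-1} \mathbf{v}_{n-1} \st c_1,\ldots,c_{n-1} \geq 0\} \subseteq \overline{W}
\]
is immediate.

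For the reverse inclusion, given $\mathbf{x}\in\overline{W}$, the plan is to exhibit an explicit choice of nonnegative coefficients. The key observation is that consecutive coordinates of $\mathbf{v}_k$ agree except at position $n-k+1$, where the entry drops by exactly $1$ (from $\tfrac{k}{n}$ to $-\tfrac{n-k}{n}$). Consequently, for any scalars $c_1,\ldots,c_{n-1}$, if $\mathbf{y} = \sum_{k=1}^{n-1} c_k \mathbf{v}_k$ then $y_j - y_{j+1} = c_{n-j}$ for each $j = 1,\ldots, n-1$. This suggests the choice $c_k = x_{n-k} - x_{n-k+1}$, which is nonnegative precisely because $\mathbf{x}\in\overline{W}$ has weakly decreasing entries.

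Finally, I would check that this choice actually recovers $\mathbf{x}$. With $c_k = x_{n-k} - x_{n-k+1}$, the vector $\mathbf{y} = \sum_k c_k \mathbf{v}_k$ satisfies $y_j - y_{j+1} = x_j - x_{j+1}$ for all $j < n$, so $\mathbf{y} - \mathbf{x}$ is a scalar multiple of $\one$; but both $\mathbf{x}$ and $\mathbf{y}$ lie in $V_0$ (as a conical combination of elements of $\overline{W}\subset V_0$ stays in $V_0$), so that scalar must be $0$ and $\mathbf{y} = \mathbf{x}$. No real obstacle here, since the whole argument reduces to matching telescoping differences; the only thing to double-check is the bookkeeping on the index $n-k+1$ where the drop occurs, which is what ties the coefficients to the consecutive differences of $\mathbf{x}$.
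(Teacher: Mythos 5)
Your proof is correct and takes essentially the same approach as the paper: both use the same coefficients $c_k = x_{n-k}-x_{n-k+1}$, which are nonnegative exactly because the entries of $\mathbf{x}$ are weakly decreasing. The only difference is in the verification that these coefficients recover $\mathbf{x}$ --- the paper computes the $i^{\text{th}}$ coordinate of $\sum_k c_k\mathbf{v}_k$ directly by a telescoping sum, whereas you match consecutive differences and then use the sum-zero normalization to pin down the additive constant; both are fine, and yours is arguably a bit cleaner.
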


\begin{proof}
Clearly $\mathbf{v}_{k}\in\overline{W}$ for $k=1,\ldots,n-1$, and thus so is any conical combination thereof. 
Conversely, given any $\mathbf{w} \in\overline{W}$, we have
\[ 
\mathbf{w}= \sum_{k=1}^{n-1} a_{k} \mathbf{v}_{k},
\]
where
\[
a_{k} = w_{n-k}-w_{n-k+1}\geq 0. 
\]
Indeed, the $i^{\text{th}}$ coordinate of $\sum_{k=1}^{n-1} a_{k} \mathbf{v}_{k}$ is
\begin{align*}
\frac{1}{n}\sum_{j=1}^{n-i}ja_{j} &  -  \frac{1}{n}\sum_{j=n-i+1}^{n-1}a_{j}(n-j)\\
 & = \frac{1}{n}\sum_{j=1}^{n-1}j(w_{n-j}-w_{n-j+1})-\sum_{j=n-i+1}^{n-1}(w_{n-j}-w_{n-j+1}) \\
 & = \frac{1}{n}\Big((n-1)w_{1}-\sum_{k=2}^{n}w_{k}\Big) - \sum_{k=1}^{i-1}(w_{k}-w_{k+1}) \\
 & = \frac{1}{n}\Big(nw_{1}-\sum_{k=1}^{n}w_{k}\Big) - (w_{1}-w_{i})=w_{1}-(w_{1}-w_{i})=w_{i}.\qedhere
\end{align*}
\end{proof}

Now for any profile $\mathbf{p}\in\Q^{n!}$, there is an $n\times n$ matrix $Q_{\mathbf{p}}$ with all row and column sums equal to $N$ 
such that the possible ordinal outcomes are those whose associated faces intersect $Q_{\mathbf{p}}\overline{W}$. As the vectors in 
$\overline{W}$ are precisely the conical combinations of $\mathbf{v}_{1},\ldots,\mathbf{v}_{n-1}$, 
$Q_{\mathbf{p}}\overline{W}$ consists of the conical combinations of $\mathbf{s}_{1},\ldots,\mathbf{s}_{n-1}$ where 
$\mathbf{s}_{k}=Q_{\mathbf{p}}\mathbf{v}_{k}$. 
Writing $Q_{\mathbf{p}}=\big[\,\setlength\arraycolsep{3pt}\begin{matrix}\mathbf{q}_{1} & \cdots & \mathbf{q}_{n}\end{matrix}\,\big]$, 
we see that 
\[
\mathbf{s}_{k}=Q_{\mathbf{p}}\Big(\frac{k}{n}\one-\sum_{j=n-k+1}^{n}\mathbf{e}_{j}\Big) 
= \frac{k}{n}Q_{\mathbf{p}}\one - \sum_{j=n-k+1}^{n}Q_{\mathbf{p}}\mathbf{e}_{j} 
= \frac{kN}{n}\one - \sum_{j=n-k+1}^{n}\mathbf{q}_{j}.
\]
Since adding multiples of $\one$ will not change the face a vector lies in, it suffices to consider conical combinations of 
\[
\mathbf{t}_{k}=\mathbf{s}_{n-k}+\frac{Nk}{n}\one=N\one - \sum_{j=k+1}^{n}\mathbf{q}_{j} 
= \sum_{j=1}^{k}\mathbf{q}_{j},\quad k=1,\ldots,n-1.
\]
Also, scaling by a positive constant has no effect on which face a vector lies in, so the preceding observations can be stated as follows.

\begin{theorem}
\label{convex hull}
Let $\mathbf{p}\in\Q^{n!}$ be any profile and let $Q_{\mathbf{p}}=\sum_{\ell=1}^{n!}p_{\ell}R_{\ell}$ be given in column form by 
$Q_{\mathbf{p}}=\big[\,\setlength\arraycolsep{3pt}\begin{matrix}\mathbf{q}_{1} & \cdots & \mathbf{q}_{n}\end{matrix}\,\big]$. 
Define $\mathbf{t}_{k}=\sum_{j=1}^{k}\mathbf{q}_{j}$. Then the possible outcomes for a positional voting procedure with 
input $\mathbf{p}$ are those whose corresponding faces intersect the convex hull of $\mathbf{t}_{1},\ldots,\mathbf{t}_{n-1}$.
\end{theorem}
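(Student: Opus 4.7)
My plan is to simply formalize the chain of reductions already sketched in the paragraph preceding the statement, since the substantive work has been done by Proposition~\ref{basis} and the earlier observations about faces being preserved by translations along $\one$ and by positive scalings.

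First, I would recall from Proposition~\ref{basis} that $\overline{W}$ is exactly the conical hull of $\mathbf{v}_{1},\ldots,\mathbf{v}_{n-1}$. Since $Q_{\mathbf{p}}$ is linear, its image $Q_{\mathbf{p}}\overline{W}$ is the conical hull of the vectors $\mathbf{s}_{k}=Q_{\mathbf{p}}\mathbf{v}_{k}$ for $k=1,\ldots,n-1$. Using $Q_{\mathbf{p}}\one = N\one$ and the definition $\mathbf{v}_{k}=\frac{k}{n}\one-\sum_{j=n-k+1}^{n}\mathbf{e}_{j}$, I would compute
\[
\mathbf{s}_{k} = \frac{kN}{n}\one - \sum_{j=n-k+1}^{n}\mathbf{q}_{j}.
\]
So the set of possible results vectors $\mathbf{r}=Q_{\mathbf{p}}\mathbf{w}$ for $\mathbf{w}\in\overline{W}$ is precisely the conical hull of $\mathbf{s}_{1},\ldots,\mathbf{s}_{n-1}$.

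Next, I would invoke the observation from the end of Section~\ref{Notation} that translating a vector in $\Q^n$ by any multiple of $\one$ leaves its face of the braid arrangement unchanged (the essentialization projects the whole line $\{c\one\}$ to a single point). So, for the purposes of identifying which faces are hit, I may replace each $\mathbf{s}_{n-k}$ by $\mathbf{t}_{k} := \mathbf{s}_{n-k} + \frac{Nk}{n}\one = \sum_{j=1}^{k}\mathbf{q}_{j}$. Thus a face $G$ is a possible outcome if and only if $G$ meets the conical hull of $\mathbf{t}_{1},\ldots,\mathbf{t}_{n-1}$ (excluding $\zero$, which corresponds to the trivial total tie and, being fixed by all permutations, hits every closed face anyway).

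Finally, since positive scaling of a nonzero vector also preserves its face (again by the essentialization argument), each nonzero point of the conical hull of $\mathbf{t}_{1},\ldots,\mathbf{t}_{n-1}$ lies in the same face as a convex combination of the $\mathbf{t}_{k}$'s obtained by normalizing $\sum c_{k}=1$. Therefore a face is a possible outcome exactly when it meets the convex hull of $\mathbf{t}_{1},\ldots,\mathbf{t}_{n-1}$, as claimed. The only potential snag is the verification of the identity $\mathbf{s}_{n-k}+\frac{Nk}{n}\one = \sum_{j=1}^{k}\mathbf{q}_{j}$, which is a short direct computation using $Q_{\mathbf{p}}\one=N\one$ and telescoping the partial column sums, so I do not expect any real obstacle.
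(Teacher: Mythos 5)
Your proposal is correct and follows essentially the same route as the paper, which proves Theorem~\ref{convex hull} via exactly this chain of reductions in the paragraph preceding the statement: Proposition~\ref{basis} identifies $\overline{W}$ as the conical hull of the $\mathbf{v}_{k}$, linearity gives $Q_{\mathbf{p}}\overline{W}$ as the conical hull of the $\mathbf{s}_{k}$, and the $\one$-translation and positive-scaling invariances of faces convert this to the convex hull of the $\mathbf{t}_{k}=\sum_{j=1}^{k}\mathbf{q}_{j}$. Your explicit handling of the reindexing $\mathbf{s}_{n-k}+\frac{Nk}{n}\one=\mathbf{t}_{k}$ and of the degenerate point $\zero$ matches, and slightly tightens, the paper's own exposition.
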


This suggests a way for a nefarious election official to obtain the most desirable possible outcome for themselves: 
Given the preferences of the electorate, construct the matrix $Q_{\mathbf{p}}$ and take $\mathbf{t}_{k}$ to be the sum of its first 
$k$ columns for $k=1,\ldots,n-1$. Then choose the most preferable outcome whose face intersects the convex hull of 
$\mathbf{t}_{1},\ldots,\mathbf{t}_{n-1}$, pick a point $\mathbf{r}$ in this intersection, and decompose it as 
$\mathbf{r}=\sum_{k=1}^{n-1}b_{k}\mathbf{t}_{k}$.
The favored outcome is assured by declaring the  weighting vector to be $\mathbf{w}=\sum_{k=1}^{n-1}b_{k}\mathbf{v}_{n-k}$. 

In practice, this could be accomplished by repeatedly generating a random probability vector 
$\mathbf{b} = [\,\setlength\arraycolsep{3pt}\begin{matrix} b_{1} & \cdots & b_{n-1}\end{matrix}\,]^\T$ and recording the 
ranking corresponding to the face containing $\mathbf{s}=T\mathbf{b}$, 
$T = \big[\,\setlength\arraycolsep{3pt}\begin{matrix}\mathbf{t}_{1} & \cdots & \mathbf{t}_{n-1}\end{matrix}\,\big]$. 
(This is just a matter of keeping track of the indices when $\mathbf{s}$ is sorted in descending order.)
After a sufficiently large number of iterations, one should have a nearly exhaustive list of possible ordinal rankings to choose from 
and can construct the desired weighting vector from the vector $\mathbf{b}$ corresponding to the favorite.

\begin{example} To illustrate this process, return to the profile given in Example~\ref{election}. We have
\[
Q_{\mathbf{p}}   = \begin{bmatrix}5 & 7 & 8 & 18 \\ 16 & 0 & 15 & 7 \\ 0 & 31 & 7 & 0 \\ 17 & 0 & 8 & 13
			        \end{bmatrix}.
			        \]
In Example~\ref{election}, we saw that using the Borda count, which corresponds to weighting vector $[1.5, 0.5, -0.5, -1.5]^\T$, 
the resulting societal ranking was $(3, 2, 4, 1)$, but using plurality, which corresponds to weighting vector $[0.75, -0.25, -0.25, -0.25]^\T$, 
the resulting societal ranking was $(4, 2, 1, 3)$. Suppose instead we want candidate 2 to win the election. We add the first $k$ columns of 
$Q_{\mathbf{p}}$ for $k=1, 2, 3$ to get
\[ 
\mathbf{t}_{1} = \begin{bmatrix} 5 \\ 16 \\ 0 \\ 17 \end{bmatrix}, 
\mathbf{t}_{2} = \begin{bmatrix} 12 \\ 16 \\ 31 \\ 17 \end{bmatrix}, 
\mathbf{t}_{3} = \begin{bmatrix} 20 \\ 31 \\ 38 \\ 25 \end{bmatrix}.
\]
Testing a few probability vectors reveals that  $b_1 = 0.6, b_2 = 0.2,$ and $b_3=0.2$ yields
\[ 
\mathbf{r} = b_{1} \mathbf{t}_{1} +  b_{2} \mathbf{t}_{2} + b_{3} \mathbf{t}_{3} = \begin{bmatrix} 9.4 \\ 19 \\ 13.8 \\ 18.6 \end{bmatrix}.
\]
Thus, we can achieve societal ranking $(2, 4, 3, 1)$ using the weighting vector
\begin{equation*}
\mathbf{w} = b_{1} \mathbf{v}_{3} + b_{2} \mathbf{v}_{2} + b_{3} \mathbf{v}_{1} 
= 0.6 \begin{bmatrix} 0.75 \\ -0.25 \\ -0.25 \\ -0.25 \end{bmatrix} +
0.2 \begin{bmatrix} 0.5 \\ 0.5 \\ -0.5 \\-0.5 \end{bmatrix} +
0.2 \begin{bmatrix} 0.25 \\ 0.25 \\ 0.25 \\ -0.75 \end{bmatrix}
 = \begin{bmatrix} 0.6 \\ 0 \\ -0.2 \\ -0.4 \end{bmatrix}.
\end{equation*}
\end{example}

\section*{Acknowledgment}
The authors wish to thank Marcelo Aguiar and Dan Katz for enlightening conversations. They are also 
grateful to the anonymous referees whose thoughtful comments improved the exposition substantially.

\bibliographystyle{plain}
\bibliography{voting}

\begin{thebibliography}{10}

\bibitem{AguMah}
Marcelo Aguiar and Swapneel Mahajan.
\newblock {\em Topics in Hyperplane Arrangements}, volume 226 of {\em
  Mathematical Surveys and Monographs}.
\newblock American Mathematical Society, Providence, RI, 2017.

\bibitem{BesRob}
J{\'e}r{\'e}my Besson and Celine Robardet.
\newblock A new way to aggregate preferences: Application to {E}urovision song
  contests.
\newblock In {\em Advances in Intelligent Data Analysis VII, 7th International
  Symposium on Intelligent Data Analysis}, volume 4723, pages 152--162, 2007.

\bibitem{Birk}
Garrett Birkhoff.
\newblock Three observations on linear algebra.
\newblock {\em Univ. Nac. Tucum\'{a}n. Revista A.}, 5:147--151, 1946.

\bibitem{Brua}
Richard~A. Brualdi.
\newblock Notes on the {B}irkhoff algorithm for doubly stochastic matrices.
\newblock {\em Canad. Math. Bull.}, 25(2):191--199, 1982.

\bibitem{CrisOrr}
Karl-Dieter Crisman and Michael~E. Orrison.
\newblock Representation theory of the symmetric group in voting theory and
  game theory.
\newblock In {\em Algebraic and Geometric Methods in Discrete Mathematics},
  volume 685 of {\em Contemp. Math.}, pages 97--115. Amer. Math. Soc.,
  Providence, RI, 2017.

\bibitem{DEMO}
Zajj Daugherty, Alexander~K. Eustis, Gregory Minton, and Michael~E. Orrison.
\newblock Voting, the symmetric group, and representation theory.
\newblock {\em Amer. Math. Monthly}, 116(8):667--687, 2009.

\bibitem{DufUcar}
Fanny Dufoss\'{e} and Bora U\c{c}ar.
\newblock Notes on {B}irkhoff--von {N}eumann decomposition of doubly stochastic
  matrices.
\newblock {\em Linear Algebra Appl.}, 497:108--115, 2016.

\bibitem{FraGro}
Jon Fraenkel and Bernard Grofman.
\newblock The {B}orda count and its real-world alternatives: Comparing scoring
  rules in {N}auru and {S}lovenia.
\newblock {\em Aust. J. Political Sci.}, 49, 2014.

\bibitem{HodKlim}
Jonathan~K. Hodge and Richard~E. Klima.
\newblock {\em The Mathematics of Voting and Elections: A Hands-On Approach},
  volume~30 of {\em Mathematical World}.
\newblock American Mathematical Society, Providence, RI, 2018.

\bibitem{Lang}
Serge Lang.
\newblock {\em Linear algebra}.
\newblock Undergraduate Texts in Mathematics. Springer-Verlag, New York, third
  edition, 1989.

\bibitem{Saari}
Donald~G. Saari.
\newblock Millions of election outcomes from a single profile.
\newblock {\em Soc. Choice Welf.}, 9(4):277--306, 1992.

\bibitem{Saari95}
Donald~G. Saari.
\newblock {\em Basic Geometry of Voting}.
\newblock Springer-Verlag, Berlin, 1995.

\bibitem{sage}
W.\thinspace{}A. Stein et~al.
\newblock {\em {S}age {M}athematics {S}oftware ({V}ersion 8.8)}.
\newblock The Sage Development Team, 2019.
\newblock {\tt www.sagemath.org}.

\bibitem{Terao}
Hiroaki Terao.
\newblock Chambers of arrangements of hyperplanes and {A}rrow's impossibility
  theorem.
\newblock {\em Adv. Math.}, 214(1):366--378, 2007.

\bibitem{vonN}
John von Neumann.
\newblock A certain zero-sum two-person game equivalent to the optimal
  assignment problem.
\newblock In {\em Contributions to the Theory of Games, Vol. 2}, Annals of
  Mathematics Studies, no. 28, pages 5--12. Princeton University Press,
  Princeton, N. J., 1953.

\end{thebibliography}

\end{document}